\newtheorem*{rep@theorem}{\rep@title}
\newcommand{\newreptheorem}[2]{%
\newenvironment{rep#1}[1]{%
 \def\rep@title{#2 \ref{##1}}%
 \begin{rep@theorem}}%
 {\end{rep@theorem}}}
	\newtheorem{dfn}{Definition}[section]
    \newtheorem{thm}[dfn]{Theorem}
	\newtheorem{prop}[dfn]{Proposition}
	\newtheorem{lem}[dfn]{Lemma}
	\newtheorem{conj}[dfn]{Conjecture}
	\newcounter{yon}
	\numberwithin{equation}{section}
\newcommand{\del}{\partial}
\newcommand{\scal}{\mathop{\mathrm{scal}} \nolimits}
\DeclareMathOperator{\Vol}{Vol}
\DeclareMathOperator{\hyp}{hyp}
\DeclareMathOperator{\dist}{dist}
\DeclareMathOperator{\const}{const}
\DeclareMathOperator{\Area}{Area}
\newcommand{\co}{\colon\thinspace}
\begin{document}

\begin{abstract}
In this paper we prove the following. Let $\Sigma$ be an $n$--dimensional closed hyperbolic manifold and let $g$ be a Riemannian metric on $\Sigma \times \mathbb{S}^1$.  Given an upper bound on the volumes of unit balls in the Riemannian universal cover $(\widetilde{\Sigma\times \mathbb{S}^1},\widetilde{g})$, we get a lower bound on the area of the $\mathbb{Z}_2$--homology class $[\Sigma \times \ast]$ on $\Sigma \times \mathbb{S}^1$, proportional to the hyperbolic area of $\Sigma$.  The theorem is based on a theorem of Guth and is analogous to a theorem of Kronheimer and Mrowka involving scalar curvature.
 \end{abstract}

	\title[Macroscopic scalar curvature and areas of cycles]
	{Macroscopic scalar curvature and areas of cycles}

    \author[Hannah Alpert]{Hannah Alpert}
    \address{Ohio State University, 43210-1174 USA}
    \email{alpert.19@osu.edu}

    \author[Kei Funano]{Kei Funano}
	\address{Tohoku
	University, Sendai, 980-8579 Japan}
	\email{kfunano@tohoku.ac.jp}

	\subjclass[2010]{53C21, 53C23}
	\keywords{Macroscopic scalar curvature, minimal surface, hyperbolic manifold}
\date{\today}

\maketitle

\section{Introduction} 
Our result in the present paper is based on the following theorem of
Guth in \cite[Theorem 2]{Guth11}.  His theorem, which we refer to as the
volume theorem, relates the volume of a manifold to the
volumes of unit balls in the universal cover.  To see how these
quantities might be related, we can imagine starting with a closed
hyperbolic manifold and rescaling the metric.  As we scale it so the
manifold gets bigger, the curvature gets closer to zero so the unit
balls in the universal cover have volumes closer to the Euclidean unit
ball.  Scaling in the other direction, the manifold gets smaller, and
the unit balls in the cover get wiggly and large.  To state the theorem
we use the following notation.  For a Riemannian manifold $(M,g)$ and
radius $r$, let $V_{(M,g)}(r)$ denote the supremal volume of a ball of
radius $r$ in $(M,g)$. We denote by $(\widetilde{M},\widetilde{g})$ the
Riemannian universal cover of $(M,g)$. By $\const
(\alpha,\beta,\cdots)$ we mean some positive absolute constant depending only on $\alpha,\beta,\cdots$.

\begin{thm}[{Volume theorem, \cite{Guth11}}]\label{thm-vol}
 Let $(M, \hyp)$ be an $n$--dimensional closed hyperbolic manifold and let $g$ be another metric on $M$. Suppose that
\[V_{(\widetilde{M}, \widetilde{g})}(1) \leq V_{\mathbb{H}^n}(1),\]
where $\mathbb{H}^n$ is the $n$--dimensional hyperbolic
 space.  Then we have
\[\Vol(M, g) \geq \const(n) \cdot \Vol(M, \hyp).\]
\end{thm}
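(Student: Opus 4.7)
My plan is to reduce the theorem to Gromov's lower bound on the simplicial volume of a closed hyperbolic manifold, $\|M\| \geq \const(n) \cdot \Vol(M, \hyp)$, by producing a matching upper bound $\|M\| \leq \const(n) \cdot \Vol(M, g)$ from the ball-volume hypothesis. Chaining the two inequalities would yield the stated conclusion.

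To produce the upper bound, I would build a simplicial model of $M$ out of the geometry of $(M, g)$. Take a maximal $\tfrac{1}{2}$-separated net $\{x_i\}_{i=1}^N \subset M$; then the balls $B(x_i, 1)$ cover $M$, and by hypothesis each such ball has volume at most $V_{\mathbb{H}^n}(1)$. Let $K$ be the nerve of this cover and let $\phi\colon M \to K$ be the associated partition-of-unity map. Provided that the cover can be arranged to have local multiplicity bounded by some $m(n)$ depending only on dimension, the nerve theorem identifies $K \simeq M$, the nerve has at most $\const(n) \cdot N$ simplices in each dimension, and a careful chain-level push-forward of the fundamental class through $\phi$ yields a cycle representing $[M] \in H_n(K;\mathbb{R})$ of $\ell^1$-norm at most $\const(n) \cdot N$. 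Combined with the counting estimate $N \leq \const(n) \cdot \Vol(M, g)$, which in turn requires a lower bound on the volumes of the disjoint small balls $B(x_i, \tfrac{1}{2})$, this yields $\|M\| \leq \const(n) \cdot \Vol(M, g)$, and Gromov's theorem closes the argument.

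The main obstacle is the pair of missing ingredients flagged above: a uniform upper bound on the multiplicity of the cover, and a uniform lower bound on the volumes of the small balls $B(x_i, \tfrac{1}{2})$. Neither follows from the ball-volume hypothesis by itself, which only controls volumes from above at a single scale. Guth's resolution, which I would adopt, is to randomize the covering scale: rather than fixing the radius at $1$, one draws radii from a suitable distribution on a range of scales and shows that with positive probability the resulting cover simultaneously satisfies multiplicity and small-ball volume bounds depending only on $n$. Implementing this averaging step, and converting the resulting random covering into a simplicial model on which the bounded cohomology computation of $\|M\|$ can be read off cleanly, constitutes the technical heart of the proof.
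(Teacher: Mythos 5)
Your overall strategy is essentially the one Guth uses and which this paper reproduces: combine the Gromov--Thurston lower bound $\|M\| \geq \const(n) \cdot \Vol(M, \hyp)$ with an upper bound $\|M\| \leq \const(n, V_0) \cdot \Vol(M, g)$ coming from a nerve-of-a-cover construction, and you correctly flag that the heart of the matter is producing a cover with uniformly bounded multiplicity and with controlled small-ball/large-ball volume ratios, which is precisely the role of Guth's multi-scale ``good cover.''

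There is, however, a genuine gap in the nerve step. You claim that once the multiplicity is bounded, ``the nerve theorem identifies $K \simeq M$.'' That is not correct: the nerve lemma requires all nonempty intersections of cover elements to be contractible, and bounded multiplicity gives no such thing. Balls in an arbitrary metric $g$ on $M$ --- even with small volume --- need not have contractible intersections. What actually makes the argument work is that $M$ is aspherical. Guth constructs a map $\psi \co N \rightarrow M$ (where $N$ is the rectangular nerve) with $\psi \circ \phi$ homotopic to the identity, by building $\psi$ skeleton by skeleton: vertices go to ball centers, edges go to short connecting paths, and then the extension to $2$-cells and higher uses that short loops in $M$ are null-homotopic together with $\pi_k(M) = 0$ for $k \geq 2$. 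This is Lemma 7 of Guth's paper, restated here as Lemma~\ref{lem:original7} and modified as Lemma~\ref{lem:nerve}. Once $\psi \circ \phi \simeq \id_M$, contractivity of the simplicial norm gives $\|[M]\| = \|\psi_* \phi_* [M]\| \leq \|\phi_* [M]\|$, which is what you want; the nerve-theorem framing is both unnecessary and false.

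Relatedly, you omit the finite-cover step, which is not optional. Your hypothesis controls ball volumes in the universal cover $\widetilde{M}$, but your net lives in $M$. To transfer the hypothesis you must first pass (via residual finiteness of $\pi_1(M)$) to a finite cover in which every loop of length at most $1$ (after rescaling, length at most $2$) is null-homotopic; only then do radius-$1$ balls in the base lift isometrically, and only then does the short-loop hypothesis needed for building $\psi$ hold. The paper carries out exactly this reduction just after restating the two theorems in Section~\ref{sect:proof}. With the nerve-theorem claim replaced by the aspherical skeleton-by-skeleton construction and the finite cover inserted, your outline matches Guth's proof.
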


In this paper we consider the following question: Does a similar theorem
hold if we take the product of a hyperbolic manifold and a closed
manifold? If we have a metric on the product, for
which the unit balls in the universal cover are not too large, does this guarantee that every
copy of the hyperbolic manifold in the product is not too small? It turns out that the answer is affirmative if the second factor is a circle.  For the first factor, we use the letter $\Sigma$ to refer
to the hyperbolic manifold even though $\Sigma$ may have dimension greater than 2, because $\Sigma$ is a hypersurface in the product.  We also refer to volume in this dimension as area, and call
the new theorem the area theorem.  It is the main theorem of this paper.

\begin{thm}[Area theorem]\label{thm-homology}
  Let $(\Sigma, \hyp)$ be an $n$--dimensional closed hyperbolic manifold, and let $g$ be a metric on $M = \Sigma \times \mathbb{S}^1$ such that 
\[V_{(\widetilde{M}, \widetilde{g})}(1) \leq V_0,\]
for some value $V_0$.  Let $Z$ be a smooth embedded hypersurface of $M$ homologous to $\Sigma \times \ast$ under $\mathbb{Z}_2$ coefficients.  Then we have
\[\Area Z \geq \const(n, V_0) \cdot \Area (\Sigma, \hyp).\]
\end{thm}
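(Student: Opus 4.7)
The strategy is to reduce Theorem~\ref{thm-homology} to Guth's volume theorem, Theorem~\ref{thm-vol}, applied on the hyperbolic base $\Sigma$. From the hypersurface $Z$ I plan to build a (possibly only measurable) Riemannian metric $g'$ on $\Sigma$ with two properties: (a)~$\Vol(\Sigma,g')\le \Area(Z,g|_Z)$ and (b)~$V_{(\widetilde\Sigma,\widetilde{g'})}(1)\le V_1$ for a constant $V_1=V_1(n,V_0)$. Rescaling $g'\mapsto \lambda^2 g'$ with $\lambda=(V_{\mathbb H^n}(1)/V_1)^{1/n}$ normalizes the hypothesis of Theorem~\ref{thm-vol}, and the resulting inequality $\lambda^n\Vol(\Sigma,g')\ge \const(n)\Vol(\Sigma,\hyp)$ combined with (a) yields the area theorem.

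For the construction, since $[Z]=[\Sigma\times\ast]$ in $H_n(M;\mathbb Z_2)$, the first-factor projection $\pi|_Z\co Z\to\Sigma$ has mod-$2$ degree one. After a small smooth perturbation I may assume $\pi|_Z$ is generic, so almost every $p\in\Sigma$ has a finite, odd (hence positive) number of regular preimages in $Z$. For $v\in T_p\Sigma$ I would set
\[
|v|_{g'}^{2}\ :=\ \inf_{z\in(\pi|_Z)^{-1}(p)}\bigl|(d(\pi|_Z)_z)^{-1}v\bigr|_{g|_Z}^{2},
\]
so that $\pi|_Z\co(Z,g|_Z)\to(\Sigma,g')$ is $1$-Lipschitz with Jacobian at most $1$; the area formula, together with the mod-$2$ degree, then yields property (a),
\[
\Area(Z,g|_Z)\ \ge\ \int_Z\bigl|\det d(\pi|_Z)\bigr|\,dA_{g|_Z}\ =\ \int_\Sigma\#\bigl((\pi|_Z)^{-1}(p)\bigr)\,dA_{g'}\ \ge\ \Vol(\Sigma,g').
\]
(One technical point is that the pointwise infimum of quadratic forms is a priori only a measurable Finsler structure; either one works in that setting or mollifies at a small cost in constants.)

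The main obstacle is property (b), which converts the $(n+1)$-dimensional $V_0$-bound in $(\widetilde M,\widetilde g)$ into an $n$-dimensional bound in $(\widetilde\Sigma,\widetilde{g'})$. Locally over a regular value, each smooth branch of $\pi|_Z$ realizes an open subset of $(\Sigma,g')$ as an isometric submanifold of $(Z,g|_Z)\subset(M,g)$; lifting to universal covers, a unit $\widetilde{g'}$-ball corresponds (modulo the branching of $\widetilde\pi|_{\widetilde Z}$) to a piece of $\widetilde Z$ sitting inside a unit $\widetilde g$-ball, so (b) reduces to the hypersurface area bound
\[
\Area_{\widetilde g}\bigl(\widetilde Z\cap B_{\widetilde g}(x,1)\bigr)\ \le\ V_1(n,V_0).
\]
I would establish such a bound by first replacing $Z$, without loss of generality, by an area-minimizer in its mod-$2$ homology class, and then running a monotonicity/isoperimetric argument for mass minimizers in $(\widetilde M,\widetilde g)$ that uses only the $V_0$-bound on ambient unit balls. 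Converting the ambient volume bound on $(\widetilde M,\widetilde g)$ into a hypersurface area bound for $\widetilde Z$ is the technical heart of the argument and the step I expect to be the main obstacle.
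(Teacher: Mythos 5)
Your plan tries to push everything down onto $\Sigma$: build a metric $g'$ on $\Sigma$ whose total volume is controlled by $\Area Z$ and whose lifted unit-ball volumes are controlled by $V_0$, then invoke Theorem~\ref{thm-vol} verbatim after rescaling. This is genuinely different from the paper, which does the opposite and generalizes the volume theorem so that it applies \emph{on} $Z$: the paper's modified volume theorem (Lemma~\ref{modified-volume}) takes a degree-$1$ map $f\co Z\to\Sigma$ with small lifted loops and a ball-volume bound for $Z$ itself, and its proof goes through Guth's rectangular nerve and the contractivity of simplicial volume --- machinery that does not care that the metric lives on $Z$ rather than on $\Sigma$. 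The paper's remaining ingredient is a cut-and-paste stability lemma for $\delta$-minimizers (Lemma~\ref{modified-stability}), which converts the ambient ball-volume bound in $\widehat M$ into an intrinsic ball-area bound on $Z$.

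There is a genuine gap in your step (b), and it is not merely a technical obstacle. Defining $|v|_{g'}$ as the pointwise infimum over branches of $\pi|_Z$ gives you the $1$-Lipschitz and Jacobian-$\leq 1$ properties you use for (a), but it destroys any relation between $g'$-distances on $\Sigma$ and intrinsic distances on $Z$: a $g'$-short path may be short only because at each instant it is allowed to choose whichever branch minimizes the instantaneous speed, and two such branches can be far apart in $Z$ (separated by a fold locus). So a unit $\widetilde{g'}$-ball in $\widetilde\Sigma$ does \emph{not} correspond to a piece of $\widetilde Z$ inside a unit (or even bounded-radius) $\widetilde g$-ball of $\widetilde M$, and the reduction you state in the sentence beginning ``lifting to universal covers\ldots'' does not hold. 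Absent that, the hypersurface-area estimate $\Area_{\widetilde g}(\widetilde Z\cap B_{\widetilde g}(x,1))\le V_1$ --- even if you could prove it --- does not yield a bound on $\Vol B_{\widetilde{g'}}(\widetilde p,1)$, so you cannot verify the hypothesis of Theorem~\ref{thm-vol} for $(\Sigma,g')$. (Replacing the infimum by a supremum would fix the lifting but break (a); choosing one branch at a time gives a discontinuous $g'$; these are not cosmetic issues but reflect that $\pi|_Z$ is not a covering map.)

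Two smaller points. First, you cannot freely replace $Z$ by a smooth area-minimizer: in codimension one, minimizers can have singularities once $\dim M\geq 8$, which is why the paper works with embedded hypersurfaces minimizing up to $\delta$ and proves a version of the stability lemma adapted to them. Second, even if (a) and (b) both held, $g'$ is only a measurable (and a priori only Finsler) structure; Theorem~\ref{thm-vol} as stated is for smooth Riemannian metrics, and mollifying an infimum of quadratic forms near the critical values of $\pi|_Z$ (where some $(d\pi)^{-1}$ blows up) is itself delicate. The ``technical heart'' you correctly identify --- converting the ambient volume bound to a bound on the hypersurface --- is exactly the paper's modified stability lemma, but the paper uses it to bound balls \emph{in} $Z$ and then feeds $Z$ straight into a generalized volume theorem, avoiding the pushforward construction entirely.
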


In contrast the following proposition says that an analogous statement does not hold for the product of a hyperbolic manifold with another manifold of dimension greater than $1$.  In that case, conversation with Guth produced examples where one copy of the hyperbolic manifold is very small but none of the unit balls in the universal cover are large. We thank him to allow us to add these examples in this paper:

\begin{prop}\label{prop:codim2}
Let $\Sigma$ be a closed hyperbolic manifold of dimension $n$, and let $X$ be any closed manifold of dimension $m \geq 2$.  There exists a constant $V_0 = 2\cdot V_{\mathbb{R}^{n+m}}(1)$ such that the following is true.  For any $\varepsilon > 0$ there is a Riemannian metric $g$ on $\Sigma \times X$ such that the volumes of unit balls in the universal cover satisfy
\[V_{(\widetilde{\Sigma \times X}, \widetilde{g})}(1) < V_0,\]
but simultaneously there is a submanifold isotopic to $\Sigma \times \ast$ with $n$--dimensional volume less than $\varepsilon$.
\end{prop}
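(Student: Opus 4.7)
My strategy is to construct, for each $\varepsilon > 0$, a Riemannian metric $g$ on $\Sigma \times X$ together with a submanifold isotopic to $\Sigma \times \{\ast\}$ of $g$-area less than $\varepsilon$, such that unit balls in the universal cover of $(\Sigma\times X,g)$ stay below $V_0 = 2V_{\mathbb{R}^{n+m}}(1)$. The essential ingredient is the extra codimension: when $\dim X = m \geq 2$, one can smoothly embed $\Sigma$ into a local Euclidean chart with trivial normal bundle and shrink its $n$-area arbitrarily without shrinking the ambient scale. This fails when $m=1$, which is precisely what Theorem~\ref{thm-homology} exploits.

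First, fix a smooth embedding $\iota_0 \co \Sigma \hookrightarrow \mathbb{R}^{n+m}$ with trivial normal bundle (available by Whitney's theorem for $m$ large; in low codimension one stabilizes or replaces the embedding by an appropriate local model). Rescale by a small factor $\delta>0$ to obtain $\iota \co \Sigma \hookrightarrow \mathbb{R}^{n+m}$ whose image has Euclidean $n$-area less than $\varepsilon$ and lies in a small Euclidean ball. Using the normal framing, extend $\iota$ to a diffeomorphism $\Phi \co \Sigma \times B^m \to U$, where $U$ is a tubular neighborhood of $\iota(\Sigma)$ in $\mathbb{R}^{n+m}$ and $\Phi(\sigma,0)=\iota(\sigma)$. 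Next, pick a small open ball $V \subset X$, identify $\Sigma\times V \cong \Sigma \times B^m$, and define $g$ on $\Sigma\times V$ by pulling back the Euclidean metric on $U$ via $\Phi$. Extend $g$ to all of $\Sigma\times X$ by smoothly interpolating across a collar to a reference metric $g_{\mathrm{out}}$ on the exterior.

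For $g_{\mathrm{out}}$ I would take a greatly rescaled product $L^2 g_{\hyp} + g_X$ for some large $L$; this makes the sectional curvature small, so unit balls in its universal cover have volume arbitrarily close to $V_{\mathbb{R}^{n+m}}(1)$. The submanifold $\Sigma \times \{0\}\subset \Sigma\times V$ is carried by $\Phi$ to $\iota(\Sigma)$, so its $g$-area equals the Euclidean area of $\iota(\Sigma)$ and is less than $\varepsilon$; it is trivially isotopic to $\Sigma \times \{\ast\}$ by moving the base point in $X$. The main obstacle will be controlling the unit ball volume across the collar, where the Euclidean chart metric and the rescaled product $g_{\mathrm{out}}$ are being smoothly blended: one must choose the collar width, the scaling parameter $\delta$, and $L$ in concert so that no unit ball in the universal cover accumulates volume exceeding $2V_{\mathbb{R}^{n+m}}(1)$. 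The factor of $2$ built into $V_0$ is precisely the slack needed for this interpolation.
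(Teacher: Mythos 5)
The paper's construction and yours are structurally different. The paper keeps a warped-product form throughout: it puts a metric $\lambda(x)\,\hyp \times g_X$ on $\Sigma\times X$, where the base metric on $X$ is flat except for a tall, thin ``finger'' and $\lambda$ decreases slowly (with small Lipschitz constant of $\log\lambda$) from a large value $\lambda_{\max}$ outside the finger to a small $\lambda_{\min}$ at the tip. The fact that $m\geq 2$ is used precisely to make the finger \emph{both} long (so $\lambda$ can vary slowly) \emph{and} thin in the $m-1$ transverse directions (so that balls near the tip have small $X$-volume); this last feature is essential and is unavailable when $m=1$. Your approach instead pulls back a Euclidean metric from a tubular neighborhood $U$ of a tiny embedding $\iota(\Sigma)\subset\mathbb{R}^{n+m}$ and then interpolates to a rescaled product metric outside.

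The gap is that the step you defer --- the collar interpolation --- is where all the difficulty lives, and it is not a small bookkeeping matter. When $\iota(\Sigma)$ is shrunk to scale $\delta$, the deck group $\pi_1(\Sigma)$ acts on the universal cover $\widetilde{U}\cong\mathbb{H}^n\times B^m$ of $U$ with translation lengths of order $\delta$. Since $\pi_1(\Sigma)$ has exponential growth, the number of deck elements moving a lift $\widehat{q}_0$ of a point over $\iota(\sigma_0)$ by distance at most $1$ is on the order of $e^{C/\delta}$, so the ball $B(\widehat{q}_0,1)$ covers the fundamental domain exponentially many times. This is the phenomenon the hypothesis $V_{(\widetilde{M},\widetilde g)}(1)\leq V_0$ is designed to constrain, and it forces you to make the region around the small slice \emph{super-exponentially} thin in the transverse directions, not merely small. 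The paper does exactly this calibration via the thin finger and the Lipschitz control of $\log\lambda$, where a unit ball near the tip factors (approximately) as a product of a huge fiber ball with a tiny base ball, and the widths are chosen so the product stays bounded. In your construction, the ball $B(\widehat{q}_0,1)$ also escapes $\widetilde U$ within distance comparable to the tubular width and spends most of its volume in the collar, which you have not specified; simply asserting that the factor of $2$ in $V_0$ is ``the slack needed'' does not address the exponential blow-up. In addition, the embedding $\iota\co\Sigma\hookrightarrow\mathbb{R}^{n+m}$ with trivial normal bundle need not exist for $2\leq m<n$ (Whitney only gives $m\geq n$, and Stiefel--Whitney classes obstruct low-codimension embeddings), so the very first step of your argument requires justification that the ``appropriate local model'' aside does not supply; the warped-product metric avoids this issue entirely since it never requires $\Sigma$ to embed in Euclidean space.
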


In the remainder of the introduction we mention some context about how the subject of this paper is related to the study of minimal surfaces and scalar curvature.  However, in order to understand the proof of the main theorem we do not require any knowledge of minimal surfaces and scalar curvature.

\subsection{Macroscopic scalar curvature} 

In the hypotheses of the volume theorem (Theorem~\ref{thm-vol}) and the area theorem (Theorem~\ref{thm-homology}), the bound on volumes of balls in the universal cover is a bound on the macroscopic scalar curvature of the manifold, defined by Guth in~\cite{Guth10'} based on ideas of Gromov.  In that essay, Guth thoroughly explains the progression of ideas into which his volume theorem and the present paper fit; we summarize a little of this history here.  Gromov was thinking of the fact that the scalar curvature of a manifold describes how the volumes of tiny balls compare to balls of the same radius in Euclidean space.  The difference between volumes has a leading term expressible in terms of the scalar curvature: we have the asymptotic expansion 
\begin{align*}
 \Vol B(p,r)=\omega_n
 r^{n}\Big(1-\frac{\scal_g(p)}{6(n+2)}r^2+O(r^3)\Big) \ \ \ (r\to 0),
 \end{align*}where $\omega_n$ is the volume of a unit ball
 in the $n$--dimensional Euclidean space (\cite[Theorem 3.98]{Gallot04}).  Understanding the relationship between scalar curvature and the topology of $M$ is a major problem in differential geometry. However since the scalar curvature says only local information it is often difficult to derive global information of the ambient manifold from it.
 
 The macroscopic scalar curvature is defined as follows.  Let $(M,g)$ be a Riemannian manifold. For a radius $r$ and $p\in M$ we
denote by $\widetilde{V}(p,r)$ the volume of the ball of radius $r$
centered at a lift of $p$. In any given dimension, for any real number $S$ there is a scaling of either hyperbolic space, Euclidean space, or the sphere that has scalar curvature $S$; we define $\widetilde{V}_S(r)$ as the volume of a ball of radius $r$ in that space. Then the \emph{macroscopic scalar curvature} at scale $r$ at $p$ is defined as the number $S$ so that
$\widetilde{V}(p,r)=\widetilde{V}_S(r)$. For example any flat torus has
the macroscopic scalar curvature zero at any scale. Note that if we do
not take the universal cover in the definition, then a flat torus might
have positive `macroscopic scalar curvature' at a certain scale.

 In view of macroscopic scalar curvature Guth's volume theorem (Theorem \ref{thm-vol}) corresponds to the
 following Schoen conjecture for
 scalar curvature up to a multiplicative constant factor.  The conjecture remains open in high dimensions.
 \begin{conj}[{\cite{Schoen89}}]If $(M,\hyp)$ is an $n$--dimensional
  closed hyperbolic manifold and $g$ is another metric on $M$ with
  $\scal_g\geq \scal_{\hyp}$, then $\Vol (M,g)\geq \Vol (M,\hyp)$.
  \end{conj}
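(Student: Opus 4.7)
The plan is to construct, for each $\varepsilon > 0$, a Riemannian metric $g_\varepsilon$ on $\Sigma \times X$ together with a submanifold $Z_\varepsilon$ isotopic to $\Sigma \times \ast$ such that simultaneously $V_{(\widetilde{\Sigma \times X},\widetilde{g}_\varepsilon)}(1) < V_0 = 2 V_{\mathbb{R}^{n+m}}(1)$ and $\Area(Z_\varepsilon) < \varepsilon$. The essential feature is the hypothesis $\dim X \geq 2$: all the room needed to make $Z_\varepsilon$ small has to come from the extra codirections in $X$.

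First I would prepare a macroscopically almost-Euclidean background. Let $g_0 = \lambda^2 g_{\hyp} + \mu^2 g_X^{(0)}$, where $g_X^{(0)}$ is a fixed reference metric on $X$ and $\lambda, \mu$ are large scaling parameters to be chosen. The key observation is that a ball of radius $1$ in $\lambda^2 g_{\hyp}$ has volume $\lambda^n V_{\mathbb{H}^n}(1/\lambda)$, which tends to $V_{\mathbb{R}^n}(1)$ as $\lambda \to \infty$: this follows from the asymptotic expansion of $\Vol B(p,r)$ stated in the introduction, since small hyperbolic balls approach Euclidean balls. The analogous estimate holds for the $X$ factor. Hence for $\lambda,\mu$ sufficiently large, $V_{\widetilde{g}_0}(1)$ is smaller than $V_0$ with a definite margin, leaving room for a perturbation of total ambient volume less than $\tfrac12 V_{\mathbb{R}^{n+m}}(1)$.

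Inside a small ambient ball $B \subset \Sigma\times X$ of $g_0$-diameter less than $1$, I would then modify $g_0$ to a new metric $g_\varepsilon$ (agreeing with $g_0$ outside $B$) and at the same time construct a local isotopy of $\Sigma \times \ast$ to a submanifold $Z_\varepsilon$ whose area is arbitrarily small. The modification exploits $m\geq 2$ by selecting two independent directions in $X$ and using one of them to realize a ``graph-and-shear'' piece: inside $B$, the local patch of $\Sigma \times \ast$ is replaced by the graph of a small embedding into those two $X$-directions, after which the ambient metric is perturbed off-diagonally so that the induced metric on this graph is compressed in a fixed normal direction. Iterating this construction over finitely many disjoint such balls covering $\Sigma \times \ast$ accumulates a small-area submanifold $Z_\varepsilon$ that is isotopic to $\Sigma \times \ast$ in $\Sigma \times X$ (via the obvious linear homotopy in each $B$). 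Since all modifications are supported in sets of $g_0$-diameter less than $1$, each unit ball in the universal cover of $g_\varepsilon$ can pick up at most the total perturbation volume, and that we can keep below the $\tfrac12 V_{\mathbb{R}^{n+m}}(1)$ margin; hence $V_{\widetilde{g}_\varepsilon}(1) < V_0$ as required.

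The hard part will be the local construction inside $B$. The obstacle is the hyperbolic nature of $\Sigma$: any attempt to shrink $\Sigma\times\ast$ by conformally rescaling $g_{\hyp}$ over a region of $\widetilde{g}_\varepsilon$-diameter at least $1$ would cause unit balls in the universal cover to blow up exponentially, because the lifted metric would contain a ball of large radius $1/\delta$ in $\mathbb{H}^n$, whose volume is on the order of $\delta^n e^{(n-1)/\delta}$ and diverges as $\delta\to 0$. The role of $m\geq 2$ is precisely to sidestep this: with two or more normal directions available in $X$, the compression of $\Sigma\times\ast$ can be carried out via a Whitney-style graph-and-shear deformation localized in an ambient region of diameter less than $1$, so that the shrinking is never witnessed at the unit scale in the universal cover. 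This is consistent with the area theorem (Theorem~\ref{thm-homology}) ruling out the analogous phenomenon in codimension $1$, where no such extra direction is available.
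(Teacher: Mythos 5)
The statement you were asked to prove is the Schoen conjecture: for a single closed hyperbolic manifold $(M,\hyp)$ and another metric $g$ on that same $M$ satisfying the pointwise hypothesis $\scal_g \geq \scal_{\hyp}$, one must conclude $\Vol(M,g) \geq \Vol(M,\hyp)$. Your proposal does not engage with this statement at all. What you outline is a construction of metrics on a product $\Sigma \times X$ with $\dim X \geq 2$ for which some submanifold isotopic to $\Sigma \times \ast$ has small area while unit balls in the universal cover stay bounded by $2 V_{\mathbb{R}^{n+m}}(1)$ --- that is precisely the content of Proposition~\ref{prop:codim2} (the codimension-greater-than-one counterexample), not of the conjecture. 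The conjecture concerns one manifold rather than a product, a scalar curvature hypothesis rather than an upper bound on volumes of balls in the universal cover, and its conclusion is a lower bound on the total volume of $(M,g)$, not the existence of a small cycle; nothing in your graph-and-shear deformation, your choice of $V_0$, or your use of $m \geq 2$ touches any of these ingredients. If anything, the counterexample-style construction you describe runs in the opposite direction from what a proof of the conjecture would require.

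There is also no proof in the paper to compare your attempt against: the paper records this as a conjecture of Schoen and states explicitly that it remains open in high dimensions, using it only as motivation --- Guth's volume theorem (Theorem~\ref{thm-vol}) is presented as its macroscopic analogue up to a multiplicative constant, with the bound on $V_{(\widetilde{M},\widetilde{g})}(1)$ playing the role of the scalar curvature hypothesis. So the genuine gap is twofold: your argument proves (a sketch of) a different proposition, and the actual statement is an open problem that no localized perturbation construction of the kind you describe could settle.
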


 The area theorem (Theorem \ref{thm-homology}) is a counterpart of the following result due to
 Kronheimer and Mrowka (for integer coefficients): 
\begin{prop}[{\cite{Kronheimer97}}]\label{thm-KM}Let $\Sigma$ be a
 closed orientable surface of genus $\geq 2$ and let $g$ be a Riemannian
 metric on $M = \Sigma \times \mathbb{S}^1$. Assume that $\scal_g\geq
 \scal_{g_0}$, where $g_0$ is the product metric of $g_{\hyp}$ on $\Sigma$ and the
 standard metric on $\mathbb{S}^1$. Let $Z$ be a surface in $M$ homologous to $\Sigma \times \ast$ under integer coefficients.  Then we have
 \begin{align*}
  \Area Z \geq  \Area(\Sigma,\hyp).
  \end{align*}
 \end{prop}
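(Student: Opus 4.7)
My plan is to invoke Seiberg-Witten theory on the $4$-manifold $X = M\times\mathbb{S}^1 = \Sigma \times T^2$, following the scheme pioneered by Kronheimer-Mrowka (also exploited by LeBrun for scalar curvature obstructions): a symplectic $\mathrm{Spin}^c$ structure with non-vanishing SW invariant converts, via the Weitzenb\"ock formula, a pointwise scalar curvature bound into a cohomological area estimate.

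\smallskip

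\emph{Step 1: Basic class and existence of SW solutions.} Equip $X$ with its product K\"ahler structure (hyperbolic on $\Sigma$, flat on $T^2$). Since $\Sigma$ has genus $g\geq 2$, one has $b^+(X) = 2g+1 > 1$, and the canonical class satisfies $\langle K_X,[\Sigma\times\mathrm{pt}]\rangle = 2g-2$. The canonical $\mathrm{Spin}^c$ structure $\mathfrak{s}_0$ with $c_1(\mathfrak{s}_0) = -K_X$ has Seiberg-Witten invariant $\pm 1$ (by Witten's explicit computation in the K\"ahler case, or Taubes in general). Hence for \emph{every} Riemannian metric $\hat g$ on $X$ there exists a solution $(A,\Phi)$ of the unperturbed SW equations $F_A^+ = \sigma(\Phi)$, $D_A\Phi = 0$.

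\smallskip

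\emph{Step 2: Weitzenb\"ock inequality.} The Weitzenb\"ock formula together with the maximum principle applied to $|\Phi|^2$ gives the pointwise bound $|\Phi|^2 \leq |\mathrm{scal}^-_{\hat g}|$, while the SW curvature equation yields $|F_A^+| = |\Phi|^2/(2\sqrt 2)$. For any $\hat g$-self-dual harmonic $2$-form $\eta$ on $X$, the Chern-Weil identity $\int_X F_A \wedge \eta = 2\pi\langle c_1(\mathfrak{s}_0),[\eta]\rangle$ combined with the pointwise bound gives
\[
2\pi\,\bigl|\langle c_1(\mathfrak s_0),[\eta]\rangle\bigr| \;\leq\; \frac{1}{2\sqrt 2}\int_X |\mathrm{scal}^-_{\hat g}|\cdot |\eta|\,dV_{\hat g}.
\]

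\smallskip

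\emph{Step 3: Stretching and adiabatic limit.} Apply the above to the stretched product metric $\hat g_L = g + L^2\,dt^2$ on $X = M\times\mathbb{S}^1$, so $\mathrm{scal}_{\hat g_L} = \mathrm{scal}_g \geq \mathrm{scal}_{g_0} = -2$. Take $\eta_L$ to be the $\hat g_L$-harmonic self-dual $2$-form whose cohomology class is Poincar\'e dual to $[Z\times\mathrm{pt}]\in H_2(X;\mathbb{Z})$; here the pairing $\langle c_1(\mathfrak s_0),[Z\times\mathrm{pt}]\rangle = -\langle K_X,[\Sigma\times\mathrm{pt}]\rangle = -(2g-2)$ is a nonzero integer precisely because $[Z]$ is an integer homology class. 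As $L\to\infty$, $\eta_L$ concentrates on $Z\times\mathbb{S}^1$ as a calibration, and the integrated inequality reduces to the $3$-dimensional statement
\[
2\pi(2g-2) \;\leq\; \mathrm{Area}(Z).
\]
By Gauss-Bonnet, $\mathrm{Area}(\Sigma,\mathrm{hyp}) = -2\pi\chi(\Sigma) = 4\pi(g-1)$, which gives exactly $\mathrm{Area}(Z)\geq \mathrm{Area}(\Sigma,\mathrm{hyp})$.

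\smallskip

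The principal obstacle is the adiabatic analysis of Step 3: one must show that the $\hat g_L$-harmonic representatives $\eta_L$ concentrate on a minimal representative of $[Z]$ at precisely the right rate so that the sharp constant $2\pi(2g-2)$ survives, and that the SW solutions $(A_L,\Phi_L)$ converge in a way compatible with the calibration. Integer (versus $\mathbb{Z}_2$) coefficients enter essentially in the topological pairing $\langle c_1(\mathfrak s_0),[Z\times\mathrm{pt}]\rangle$: for a $\mathbb{Z}_2$ class this pairing is only defined mod $2$ and the argument collapses, which is why the theorem as stated is restricted to integer coefficients.
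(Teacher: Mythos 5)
Your overall strategy (Seiberg--Witten theory on $\Sigma \times T^2$) is the original Kronheimer--Mrowka route, and it is genuinely different from the proof given in the appendix of this paper, which avoids gauge theory entirely: there one takes an area-minimizing integral representative $n_1\Sigma_1+\cdots+n_k\Sigma_k$ (Hass--Scott/Lawson existence), applies the stability inequality with test function $\phi\equiv 1$, and then converts $\sum_i |n_i|\int_{\Sigma_i}K_{\Sigma_i}$ into $-\Vol(\Sigma,\hyp)$ using Gauss--Bonnet, asphericity (to exclude sphere components), the simplicial norm, and the Gromov--Thurston theorem. Your Steps 1 and 2 are fine in outline: $b^+(X)=2g+1>1$, the canonical class pairs with $[\Sigma\times\mathrm{pt}]$ as $2g-2$, and Witten/Taubes give a nonvanishing invariant, hence solutions for every metric. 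One caveat: the Weitzenb\"ock-plus-maximum-principle argument gives $\sup_X|\Phi|^2\le\sup_X|s^-_{\hat g}|$, not the pointwise bound $|\Phi(x)|^2\le|s^-_{\hat g}(x)|$ that your displayed inequality uses; to get integral estimates one needs LeBrun's integration-by-parts refinement, which turns your estimate into an $L^2$--$L^2$ bound rather than the $L^1$-weighted one you wrote. (For the present application $s_g\ge -2$, so the sup bound suffices, but the resulting right-hand side is then a constant times $\int_X|\eta|$.)

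The genuine gap is Step 3, and it is not a deferred technicality but the entire content of the theorem in this approach. The stretched metric $\hat g_L=g+L^2\,dt^2$ carries no information about the particular representative $Z$: nothing in your construction distinguishes $Z$ from any other surface in its class, and the harmonic self-dual representative of the relevant class minimizes the $L^2$-norm, not the mass, so there is no mechanism forcing $\eta_L$ to concentrate along $Z\times\mathbb{S}^1$ ``as a calibration,'' let alone at the exact rate needed for the sharp constant. Indeed, for the product-type metric $\hat g_L$ the harmonic representative is explicitly of the form built from the harmonic $1$-form on $(M,g)$ dual to $[Z]$ (wedged with $dt$ plus its Hodge dual), which is spread over all of $M$; the adiabatic limit therefore produces bounds in terms of global norms of harmonic forms on $(M,g)$, and the passage from such quantities to the (infimal) area of surfaces in the class $[\Sigma\times\ast]$, with constant exactly $2\pi(2g-2)$, is precisely the hard step that Kronheimer and Mrowka's argument supplies and your sketch omits. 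In the Thom-conjecture-style arguments where concentration genuinely occurs, the metric is deliberately stretched along a neck around the surface; here you cannot modify $g$ on $M$ without destroying the hypothesis $\scal_g\ge\scal_{g_0}$. As written, the proposal does not prove the proposition; either this missing adiabatic/duality analysis must be carried out (as in the cited Kronheimer--Mrowka paper), or one can bypass gauge theory altogether via the stable-minimal-surface argument that the appendix presents.
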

The above proposition is a special case of a result in
\cite[Section 3]{Kronheimer97}. Kronheimer and Mrowka proved the result
by using the Seiberg-Witten monopole equations. They mentioned that the
result can be obtained by using the existence of a stable minimal
surface and the second variation inequality. In the appendix we demonstrate
the detailed proof of Proposition \ref{thm-KM} using this second method. An
analogous result of Proposition \ref{thm-KM} (for scalar curvature) seems unknown for higher
dimension. A crucial point of Theorem \ref{thm-homology} is that it holds
for any dimension. 

In Section~\ref{sect:outline} we outline the proof of the area theorem (Theorem~\ref{thm-homology}) by stating two lemmas and showing how they imply the area theorem.  In Section~\ref{section-proofs} we prove the first lemma, the modified stability lemma (Lemma~\ref{modified-stability}).  In Section~\ref{sect:proof} we prove the second lemma, the modified volume theorem (Lemma~\ref{modified-volume}), completing the proof of the area theorem.  In Section~\ref{sect:codim 2} we prove Proposition~\ref{prop:codim2} about the case of codimension greater than $1$.  And, the appendix addresses Proposition~\ref{thm-KM} of Kronheimer and Mrowka.

\section{Proof of the area theorem}\label{sect:outline}

In this section we state the modified stability lemma (Lemma~\ref{modified-stability}) and the modified volume theorem (Lemma~\ref{modified-volume}), and we prove the area theorem (Theorem~\ref{thm-homology}) assuming these two lemmas.  The proofs of the lemmas appear in Section~\ref{section-proofs} and Section~\ref{sect:proof}.

The proof of the area theorem follows these steps:
\begin{enumerate}
\item Take an almost area-minimizing closed hypersurface $Z$ in $M = (\Sigma \times \mathbb{S}^1, g)$ homologous to $\Sigma \times \ast$ under $\mathbb{Z}_2$ coefficients.
\item Using the hypothesis about areas of balls in $\widetilde{M}$, show that the balls of radius~$\frac{1}{2}$ in $Z$ are not too large in area.
\item Apply a version of the volume theorem (Theorem~\ref{thm-vol}) to $Z$ to show that the total area of $Z$ is not too small compared to the hyperbolic area of $\Sigma$.
\end{enumerate}

For the first step, it would be convenient if we could really choose a minimal surface; then we could use properties such as monotonicity.  However, the existence of minimal surfaces has its subtleties, and if dimension is greater than 6 we cannot hope to take an area-minimizing closed hypersurface without singularities (see \cite{Simon83}).  Fortunately we do not need minimal surfaces for this proof.  We say that a hypersurface is \emph{minimizing up to $\delta$} if its area is within $\delta$ of the infimal area of a smooth embedded hypersurface in its homology class.  We choose $Z$ to be such a hypersurface, homologous to the cross-sections $\Sigma \times \ast$ of $M$. 

For the second step, we adapt the stability lemma from the paper~\cite{Guth10} of Guth.  Here is the statement of the modified stability lemma; its proof appears in Section~\ref{section-proofs} along with the original statement of the stability lemma.

\begin{lem}[Modified stability lemma]\label{modified-stability}
  Let $\Sigma$ be an $n$--dimensional closed manifold, and let $g$ be a Riemannian metric on $M = \Sigma \times \mathbb{S}^1$.  Suppose that $Z^n \subset M$ is a smooth embedded hypersurface, homologous to $\Sigma \times \ast$ under $\mathbb{Z}_2$ coefficients, and minimizing up to $\delta$ among all such embedded hypersurfaces.  Let $\widehat{M}$ denote the covering space of $M$ diffeomorphic to $\Sigma \times \mathbb{R}$.  Let $p$ be any point in $M$, and suppose that the unit ball in $\widehat{M}$ centered at a lift $\widehat{p}$ of $p$ has volume at most $V_0$.  Then the following inequality holds:
\[\Area(Z \cap B(p, 1/2)) \leq 2V_0 + \delta.\]
\end{lem}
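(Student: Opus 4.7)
The plan is to adapt Guth's stability argument: by a coarea/averaging step, find a radius $r^* \in [1/2, 1]$ at which a metric sphere in $\widehat{M}$ has small area; then do surgery on $Z$ across this sphere to produce a competitor $Z^*$; and finally apply the almost-minimizing hypothesis. Throughout I would work in the intermediate cover $\widehat{M}$ in order to use the hypothesis on $\Vol(B(\widehat{p},1))$.

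First I would lift $Z$ to $\widehat{Z} := \pi^{-1}(Z) \subset \widehat{M}$, where $\pi \co \widehat{M} \to M$ is the covering projection. Since $Z$ is mod-$2$ homologous to $\Sigma \times \ast$ in $M$, the lift $\widehat{Z}$ represents the class $[\Sigma \times \mathbb{Z}]$ in $H_n^{\mathrm{lf}}(\widehat{M};\mathbb{Z}_2)$; this class vanishes, being the mod-$2$ boundary of the locally finite chain $\Sigma \times \bigsqcup_{k\in\mathbb{Z}}[2k,\,2k+1]$. Hence $\widehat{Z}$ also bounds some locally finite mod-$2$ chain $\widehat{\mathcal{U}} \subset \widehat{M}$. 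The coarea inequality applied to the distance function from $\widehat{p}$ gives
\[\int_{1/2}^{1} \Area\bigl(\partial B(\widehat{p}, r)\bigr)\, dr \leq \Vol\bigl(B(\widehat{p},1)\bigr) \leq V_0,\]
so by averaging one finds $r^* \in [1/2,1]$ with $\Area(\partial B(\widehat{p}, r^*)) \leq 2V_0$; by Sard we may arrange in addition that $\partial B(\widehat{p}, r^*)$ is smooth and transverse to $\widehat{Z}$. Set $B := B(\widehat{p}, r^*)$.

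Next, construct the competitor. The compact chain $\widehat{\mathcal{U}} \cap B$ has mod-$2$ boundary $(\widehat{Z} \cap B) + (\widehat{\mathcal{U}} \cap \partial B)$, exhibiting an in-$B$ cobordism between $\widehat{Z} \cap B$ and $S := \widehat{\mathcal{U}} \cap \partial B$, with $\Area(S) \leq \Area(\partial B) \leq 2V_0$. Pushing this cobordism forward by $\pi$---valid because $\widehat{\mathcal{U}} \cap B$ is compactly supported---and then smoothing the resulting chain into a smooth embedded hypersurface, produces a competitor $Z^* \subset M$ in the same $\mathbb{Z}_2$-homology class as $Z$ with $\Area(Z^*) \leq \Area(Z \setminus \pi(B)) + \Area(\pi(S)) \leq \Area(Z) - \Area(Z \cap \pi(B)) + 2V_0$, using that a covering map is area-nonincreasing. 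The almost-minimizing hypothesis yields $\Area(Z) \leq \Area(Z^*) + \delta$, whence $\Area(Z \cap \pi(B)) \leq 2V_0 + \delta$. Since $r^* \geq 1/2$ implies $B(p,1/2) \subset \pi(B)$, the desired inequality follows.

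The main obstacle is the surgery step when $\pi$ fails to be injective on $B$: the naive pushforward of the upstairs cobordism does not cleanly swap $Z \cap \pi(B)$ for $\pi(S)$, and the resulting downstairs chain can have self-intersections and multiplicities. The area estimates still transfer through the area-nonincreasing property of $\pi$, but care is needed to smooth/perturb the pushforward into an embedded hypersurface carrying the correct $\mathbb{Z}_2$-class---either by transverse perturbation downstairs, or by an equivariant surgery upstairs on a well-chosen fundamental region containing $B$.
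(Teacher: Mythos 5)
Your overall strategy---lift to $\widehat{M}$, use coarea to find a radius at which a sphere has area at most $2V_0$, do a surgery across it, and apply the almost-minimizing hypothesis---is the same skeleton as the paper's. But the surgery step contains a genuine gap that you flag and then underestimate, treating it as a smoothing nuisance when in fact the area estimate itself is for the wrong quantity.

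When $\pi$ fails to be injective on $B=B(\widehat{p},r^*)$ (which is exactly the case the modified lemma is meant to cover---Guth's original lemma already handles $L(\alpha)>2$), a point $q\in Z\cap B(p,1/2)$ can have two or more preimages in $\widehat{Z}\cap B$. The mod-$2$ pushforward $\pi_*(\widehat{Z}\cap B)$ retains only the points of $Z$ with an \emph{odd} number of such preimages, so it is in general a proper subset of $Z\cap\pi(B)$. Your competitor $Z^*=Z+\pi_*(\widehat{Z}\cap B)+\pi_*(S)$ therefore removes only $\pi_*(\widehat{Z}\cap B)$, not all of $Z\cap\pi(B)$, and the almost-minimizing hypothesis gives
\[\Area\bigl(\pi_*(\widehat{Z}\cap B)\bigr)\le 2V_0+\delta,\]
which need not control $\Area(Z\cap B(p,1/2))$. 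Neither of your proposed repairs fixes this: perturbing transversally downstairs does not change mod-$2$ multiplicities, and there is no fundamental domain containing a ball on which $\pi$ is non-injective. (Separately, Sard alone does not make a distance sphere smooth; the paper needs a dedicated perturbation lemma, Lemma~\ref{lem-transverse}, to get a smooth level set that is simultaneously transverse to $Z$ and to its own deck translates.)

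The paper closes the gap not by pushing down a cobordism but by an intersection-number argument. It builds $Z_1\subset S$ (with $S$ the image of $\partial\widehat{B}$ in $M$, with multiplicity mod $2$) from $N=\bigcup_{i\in\mathbb{Z}}(\widehat{B}+i)$ and $N_+=\bigcup_{i\ge 0}(\widehat{B}+i)$, taking $\widehat{Z_1}$ to be the frontier of $N_+$ in $N$ and pushing it down mod $2$. A black-and-white coloring of the integers $i$ according to whether $\widehat{p}+i\in N_+$ shows that every loop in $B$ meets $Z_1$ and $Z$ with the same mod-$2$ intersection number. That parity agreement is what licenses the crucial two-coloring of the components of $B\setminus(Z\cup S)$, which in turn lets one replace \emph{all} of $Z\cap B$ by pieces of $S$. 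This intersection-number mechanism is precisely the idea your cobordism-pushforward lacks, and without it the argument only bounds the odd-multiplicity part of $Z\cap B(p,1/2)$.
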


For the third step, we use the following version of the volume theorem; its proof appears in Section~\ref{sect:proof}.

 \begin{lem}[Modified volume theorem]\label{modified-volume}
 Let $\Sigma$ be an $n$--dimensional closed hyperbolic manifold, and let $(Z, g)$ be an $n$--dimensional closed Riemannian manifold.  Suppose that $f \co (Z, g) \rightarrow \Sigma$ is a degree $1$ map such that $f(\gamma)$ is contractible for every loop $\gamma \subset Z$ of length less than $1$, and suppose that
\[V_{(Z, g)}(1/2) \leq V_0.\]
Then we have
\[\Vol (Z, g) \geq \const(n, V_0) \cdot \Vol(\Sigma, \hyp).\]
\end{lem}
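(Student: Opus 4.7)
The plan is to lift the data to the universal cover of $\Sigma$ and then adapt Guth's proof of Theorem~\ref{thm-vol} to the resulting equivariant setting. Let $\hat Z := Z \times_\Sigma \widetilde{\Sigma}$ be the pullback cover of $Z$; since $\deg f = 1$ forces $f_* \co \pi_1(Z) \to \pi_1(\Sigma)$ to be surjective, $\hat Z \to Z$ is a regular cover with deck group $\pi_1(\Sigma)$, and $f$ lifts to a $\pi_1(\Sigma)$-equivariant map $\hat f \co \hat Z \to \widetilde{\Sigma} = \mathbb{H}^n$. I equip $\hat Z$ with the pullback of $g$.

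The short-loop hypothesis then transfers the ball volume bound into $\hat Z$. If two distinct points $\hat q_1, \hat q_2 \in B_{\hat Z}(\hat p, 1/2)$ projected to the same point of $Z$, the broken geodesic $\hat q_1 \to \hat p \to \hat q_2$ would project to a loop $\gamma \subset Z$ of length less than $1$; by hypothesis $f(\gamma)$ is contractible in $\Sigma$, so $\gamma$ lifts to a closed loop in $\hat Z$, forcing $\hat q_1 = \hat q_2$. Consequently every radius-$1/2$ ball in $\hat Z$ projects injectively into a radius-$1/2$ ball in $Z$, so its volume is at most $V_0$.

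Now the setup mirrors that of Guth's volume theorem: a Riemannian manifold with an equivariant map to $\mathbb{H}^n$ and uniformly small balls in the cover. The aim is to run his argument to produce a $\pi_1(\Sigma)$-equivariant Lipschitz approximation $\hat f' \co \hat Z \to \mathbb{H}^n$, homotopic to $\hat f$ with Lipschitz constant $L = L(n, V_0)$. Descending gives $f' \co Z \to \Sigma$ of the same degree $1$, and the change of variables $\int_Z (f')^* \Omega_\Sigma = \Vol(\Sigma, \hyp)$ (with $\Omega_\Sigma$ the hyperbolic volume form) together with the pointwise estimate $|(f')^* \Omega_\Sigma| \leq L^n$ yields $\Vol(\Sigma, \hyp) \leq L^n \Vol(Z, g)$, which is the desired bound with $\const(n, V_0) = L^{-n}$.

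The main obstacle, as in Guth's original proof, is the construction of the Lipschitz approximation $\hat f'$. A direct equivariant smoothing of $\hat f$ by a partition-of-unity barycenter on a net in $\hat Z$ is hindered by two issues: first, $\hat f$ is only continuous a priori, so nearby net points can have images arbitrarily far apart in $\mathbb{H}^n$; second, we lack a two-sided ball volume bound in $Z$, so we cannot directly control the multiplicity of a $1/8$-separated net. Both issues are handled in \cite{Guth11} by inductively reassigning the values of $\hat f$, starting from a sparse skeleton and using the ball volume bound at each step to keep the reassignment within the desired homotopy class. Executing this reassignment equivariantly in the present setting is the principal technical task that remains.
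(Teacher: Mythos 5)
Your strategy is genuinely different from the paper's, and it has a gap at its center that you yourself flag.  The paper proves the lemma by composing the Guth nerve map $\phi\co Z\to N$ (from a good cover of $Z$) with a map $\psi\co N\to\Sigma$ supplied by the modified Lemma~7, and then runs entirely through simplicial volume: Guth's Lemma~9 gives $\|\phi_*[Z]\|\le C(n,V_0)\Vol(Z,g)$, contractivity gives $\|[\Sigma]\|\le\|\phi_*[Z]\|$, and the Gromov--Thurston proportionality principle converts $\|[\Sigma]\|$ back to $\Vol(\Sigma,\hyp)$.  At no point does that argument produce, or need, a map to $\Sigma$ with a controlled Lipschitz constant.

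That is exactly where your proposal diverges.  Your preliminary steps are fine and even illuminating: the degree-one hypothesis does make $f_*$ surjective on $\pi_1$, the pullback cover $\hat Z\to Z$ is regular with deck group $\pi_1(\Sigma)$, and your short-loop observation---that radius-$1/2$ balls in $\hat Z$ project injectively to $Z$ and so inherit the volume bound $V_0$---is a clean way to transfer the hypothesis upstairs.  But the conclusion is then routed through an equivariant Lipschitz approximation $\hat f'\co\hat Z\to\mathbb{H}^n$ with $\Lip(\hat f')\le L(n,V_0)$, and you do not construct it; you describe the naive barycentric smoothing, correctly note two obstructions to it, and then assert that ``both issues are handled in \cite{Guth11} by inductively reassigning the values of $\hat f$.''  This attribution is not right.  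Lemma~7 of~\cite{Guth11} produces $\psi\co N\to\Sigma$ only up to homotopy---by extending a $1$-skeleton map cell by cell using asphericity---and there is no metric control on $\psi$ at all; the quantitative input enters solely through Lemma~9 and the simplicial norm, not through a pointwise Jacobian bound.  A ball of volume at most $V_0$ in $(Z,g)$ can have arbitrarily large diameter, so the hypotheses give no obvious handle on a global Lipschitz constant, and a statement of the form ``$f$ is homotopic to an $L(n,V_0)$-Lipschitz map'' is strictly stronger than what Guth's machinery yields.  Until that construction is supplied (or replaced by the simplicial-volume argument, which sidesteps it entirely), the proposal does not prove the lemma.
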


Having stated the two lemmas, we are ready to show how they imply the area theorem.

\begin{proof}[Proof of the area theorem (Theorem~\ref{thm-homology})]
 First we take a finite cover of $M$ in the $\Sigma$ direction so that the only remaining loops of length at most $1$ are in the $\mathbb{S}^1$ direction.  Specifically, using compactness of $M$ we see that there are only finitely many classes in $\pi_1(M) = \pi_1(\Sigma) \times \pi_1(\mathbb{S}^1)$ with length at most $1$.  Because fundamental groups of hyperbolic manifolds are residually finite, we may find a finite-index normal subgroup $K$ of $\pi_1(\Sigma)$ that excludes all nonzero $\pi_1(\Sigma)$ components of these short loops.  Then $K \times \pi_1(\mathbb{S}^1)$ is a finite-index normal subgroup of $\pi_1(M)$.  The corresponding covering space $(\widehat{M}, \widehat{g})$ is diffeomorphic to $\widehat{\Sigma} \times \mathbb{S}^1$, where $\widehat{\Sigma}$ is the cover of $\Sigma$ corresponding to the normal subgroup $K$; let $k$ be the index of $K$ in $\pi_1(\Sigma)$, and of $\widehat{\Sigma}$ over $\Sigma$, and of $\widehat{M}$ over $M$.  In $\widehat{M}$, every loop of length at most $1$ projects to a null-homotopic loop in $\widehat{\Sigma}$.  The preimage of $Z$ in $\widehat{M}$ is some $k$-fold cover $\widehat{Z}$ of $Z$, and $\widehat{Z}$ is homologous to $\widehat{\Sigma} \times \ast$.  It suffices to prove the area theorem for $\widehat{Z}$, $\widehat{\Sigma}$, and $\widehat{M}$ instead of $Z$, $\Sigma$, and $M$, because $\Area \widehat{Z} = k \cdot \Area Z$ and $\Area (\widehat{\Sigma}, \hyp) = k \cdot \Area (\Sigma, \hyp)$.  Thus we may relabel our spaces and assume that $M$ has the property that every loop of length at most $1$ projects to a null-homotopic loop in $\Sigma$.

Note that we could take a further cover in the $\mathbb{S}^1$ direction so that every loop of length at most $1$ is null-homotopic.  However, if the cover has many sheets, then the preimage of $Z$ is homologous to a large multiple of $\Sigma$ and has area many times that of $Z$.  Therefore, a lower bound on the area of hypersurfaces homologous to $\Sigma$ in the cover does not easily imply any bound on the area of $Z$.  This fact that $Z$ does not behave well under many-sheeted covers in the $\mathbb{S}^1$ direction is our reason for proving the modified stability lemma (Lemma~\ref{modified-stability}) instead of being able to use Guth's original stability lemma (Theorem~\ref{stability-lemma}); more detail appears in Section~\ref{section-proofs}.

It suffices to prove the area theorem only for those hypersurfaces $Z$ that are minimizing up to $\delta$ (for, say, $\delta = 1$), because the other hypersurfaces have even greater area.  Thus, we apply the modified stability lemma (Lemma~\ref{modified-stability}) to $Z$, to obtain
\[V_{(Z, g\vert_Z)}(1/2) \leq 2V_0 + \delta,\]
and then apply the modified volume theorem (Lemma~\ref{modified-volume}), taking $f$ to be the composed inclusion and projection $Z \rightarrow M \rightarrow \Sigma$, to obtain the desired inequality
\[\Area Z \geq \const(n, V_0) \cdot \Area (\Sigma, \hyp).\]
 This completes the proof of the area theorem.
\end{proof}

\section{Proof of the modified stability lemma}\label{section-proofs}

In this section we prove the modified stability lemma.  This lemma is based on the stability lemma, which we reproduce here.  Given a cohomology class $\alpha$ in degree $1$, its \emph{length} $L(\alpha)$ is defined to be the infimal length of any $1$--cycle that has nonzero pairing with $\alpha$.

 \begin{thm}[\cite{Guth10}, Stability lemma]\label{stability-lemma}
 Let $(M, g)$ be an $(n+1)$--dimensional closed Riemannian manifold.  Let $\alpha \in H^1(M, \mathbb{Z}_2)$.  Suppose that $Z^n \subset M$ is a smooth embedded surface Poincar\'e dual to $\alpha$ and minimizing up to $\delta$ (among embedded surfaces in its homology class).  Suppose that $L(\alpha) > 2$.  Let $p$ be any point in $M$.  Then the following inequality holds:
\[\Area(Z \cap B(p, 1/2)) \leq 2\Vol (B(p, 1)) + \delta.\]
\end{thm}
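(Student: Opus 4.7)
The plan is a competitor argument: I construct a hypersurface $Z'$ that is $\mathbb{Z}_2$-homologous to $Z$, agrees with $Z$ outside a ball around $p$, and has much smaller area inside the ball, and then invoke the $\delta$-almost minimality of $Z$.

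First I work in the double cover $\pi \colon \widehat{M} \to M$ associated with $\alpha$, with deck involution $\sigma$. The hypothesis $L(\alpha) > 2$ is equivalent to $d_{\widehat{M}}(\widehat{p}, \sigma\widehat{p}) > 2$ for every lift $\widehat{p}$ of $p$, so by the triangle inequality no two points of $\widehat{B}(\widehat{p}, 1)$ can be related by $\sigma$; hence $\pi$ restricts to a diffeomorphism $\widehat{B}(\widehat{p}, 1) \to B(p, 1)$. Moreover $\pi^* \alpha = 0$ in $H^1(\widehat{M}, \mathbb{Z}_2)$, so $\widehat{Z} := \pi^{-1}(Z)$ is Poincar\'e dual to zero and therefore bounds a region $A^+ \subset \widehat{M}$ with $\sigma(A^+) = \widehat{M} \setminus A^+$.

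Next I use coarea to pick a good radius. Since $\int_{1/2}^{1} \Area S(p, r) \, dr \leq \Vol B(p, 1)$, there is a regular value $r \in [1/2, 1]$ of the distance function $d_p$ with $\Area S(p, r) \leq 2\Vol B(p, 1)$. Inside $\widehat{B}(\widehat{p}, r)$, the region $A^+ \cap \widehat{B}(\widehat{p}, r)$ has boundary $\widehat{W} \cup \widehat{W}'$, where $\widehat{W} := \widehat{Z} \cap \widehat{B}(\widehat{p}, r)$ and $\widehat{W}' := A^+ \cap \widehat{S}(\widehat{p}, r)$; in particular $\widehat{W}$ and $\widehat{W}'$ cobound a $\mathbb{Z}_2$-chain in $\widehat{B}(\widehat{p}, r)$. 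Pushing down via the diffeomorphism $\pi$, the downstairs pieces $W := Z \cap B(p, r)$ and $W' := \pi(\widehat{W}') \subset S(p, r)$ are $\mathbb{Z}_2$-homologous rel boundary in $B(p, r)$, and $\Area W' \leq \Area S(p, r) \leq 2\Vol B(p, 1)$.

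Set $Z' := (Z \setminus W) \cup W'$, smoothed near the seam $Z \cap S(p, r)$ at arbitrarily small area cost. By the cobounding relation, $Z'$ is $\mathbb{Z}_2$-homologous to $Z$ in $M$, so the $\delta$-minimality of $Z$ yields
\[\Area Z - \delta \leq \Area Z' \leq \Area Z - \Area(Z \cap B(p, r)) + \Area W',\]
which rearranges to $\Area(Z \cap B(p, 1/2)) \leq \Area(Z \cap B(p, r)) \leq 2\Vol B(p, 1) + \delta$. The main obstacle is geometric-measure-theoretic rather than conceptual: one has to choose $r$ generic enough that $W'$ is a genuine codimension-$1$ piece of $S(p, r)$ with boundary matching that of $Z \setminus W$, and verify that the corner-smoothing produces an honest smooth embedded competitor against which the $\delta$-almost-minimizing hypothesis can be applied. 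Both can be arranged by a small perturbation, so the argument goes through.
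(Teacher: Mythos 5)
Your proposal is correct, but it reaches the stability lemma by a route that is related to, yet technically distinct from, what the paper does. The paper treats this statement as a degenerate special case of its modified stability lemma (Lemma~\ref{modified-stability}): there the relevant cover is the infinite cyclic ($\mathbb{Z}$) cover $\widehat{M}$ obtained by pulling back $\mathbb{R}\to \mathbb{S}^1$ along a classifying map for $\alpha$; since $\widehat{Z}$ does \emph{not} bound a compact region in that noncompact cover, the replacement piece $Z_1$ has to be manufactured indirectly as the image of $\partial N_+$, where $N_+=\bigcup_{i\ge 0}(\widehat{B}+i)$, and the correctness of the intersection count is checked by the combinatorial integer-coloring argument. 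When $L(\alpha)>2$ that machinery collapses (the translates of $\widehat{B}$ are disjoint, so $Z_1=\emptyset$ and the two-coloring of $B\setminus(Z\cup S)$ is just by parity of intersection with $Z$), and one recovers the stability lemma, as the paper notes after Lemma~\ref{modified-stability}. You instead pass to the $\mathbb{Z}_2$ double cover associated to $\alpha$, use that $\pi^*\alpha=0$ forces the closed hypersurface $\widehat{Z}$ to bound a region $A^+$ in the (compact) cover, and take $W'=\pi(A^+\cap\widehat{S})$ as the replacement piece in one stroke. This is cleaner for the stability lemma itself --- no integer counting, and it does not need $\alpha$ to admit an integral lift --- but it is exactly the feature that fails for the modified version (in the $\mathbb{Z}$-cover $\widehat{Z}$ bounds no compact region), which is why the paper uses the $N_+$ device instead. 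Your coloring by ``inside/outside $A^+$'' is the same two-coloring the paper obtains from parity of $Z$-intersection. The one real gap you should not elide is the transversality input: the distance sphere $S(p,r)$ is generally not a smooth submanifold and may meet $Z$ badly, so one must perturb the distance function and pick $r$ generically before the cut-and-paste yields an honest smooth embedded competitor; the paper isolates exactly this step as Lemma~\ref{lem-transverse}, and your argument needs the same lemma (or an equivalent jet-transversality argument), not just a remark that ``a small perturbation'' suffices.
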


The stability lemma gets its name by analogy with the stability inequality involving scalar curvature and minimal hypersurfaces, which we mention in the appendix.

The stability lemma is insufficient for our purposes in proving the area theorem (Theorem~\ref{thm-homology}): we would like to apply it while taking $\alpha$ to be the cohomology class on $M = \Sigma \times \mathbb{S}^1$ corresponding to the $\mathbb{S}^1$ factor, but we do not have any control over the length of $\alpha$.  The modified stability lemma removes the hypothesis about the length of $\alpha$.

In fact, we can generalize the modified stability lemma so that the stability lemma becomes a special case.  We take $(M, g)$, $\alpha$, and $Z$ as in the stability lemma.  Then we define $\widehat{M}$ to be the covering space of $M$ constructed as follows: take a classifying map $M \rightarrow \mathbb{S}^1$ so that $\alpha$ is the pullback of the fundamental class of $\mathbb{S}^1$, and then pull back the universal cover $\mathbb{R} \rightarrow S^1$ along this classifying map.  Then, if we define $V_0$ as in the modified stability lemma, the conclusion of the modified stability lemma holds.  The proof is the same as that of the modified stability lemma.  In the case $L(\alpha) > 2$, the unit balls in $\widehat{M}$ have the same shape as their images in $M$, and we recover the stability lemma.

The proof of the modified stability lemma closely follows the proof of the stability lemma but requires an additional idea.  Roughly, both proofs show that if the desired inequality does not hold, then we can replace the portion of $Z$ inside $B(p, 1/2)$ with something that is homologous but has significantly smaller area, contradicting the assumption that $Z$ is minimizing up to $\delta$.

\begin{proof}[Proof of modified stability lemma (Lemma~\ref{modified-stability})]
 As in the proof of the stability lemma in~\cite{Guth10}, we can use the
 coarea inequality to find $t$ between $1/2$ and $1$ such that the
 sphere in $\widehat{M}$ with radius $t$ and center $\widehat{p}$ has
 area at most $2V_0$.  We would like to be able to choose this $t$ such
 that the corresponding sphere is a smooth submanifold of $\widehat{M}$,
 and its image in $M$ is an immersed hypersurface with transverse
 self-intersections and transverse intersections with $Z$. This may be tricky. Instead we use Lemma~\ref{lem-transverse} (which appears after the present proof) to find a smooth function $f \co \widehat{M} \rightarrow \mathbb{R}$ that is $\varepsilon$--close to the function $\dist(-, \widehat{p})$, such that for a dense set of values of $t$, the level set $f^{-1}(t)$ has the desired transversality properties.  The $\varepsilon$--closeness guarantees that the sublevel set $f^{-1}[-\varepsilon, 1/2 + \varepsilon]$ contains the ball $B(\widehat{p}, 1/2)$ in $\widehat{M}$, and that the sublevel set $f^{-1}[-\varepsilon, 1 - \varepsilon]$ is contained in the ball $B(\widehat{p}, 1)$.  We use the coarea inequality to find $t$ between $1/2 + \varepsilon$ and $1-\varepsilon$ such that $f^{-1}(t)$ has the desired transversality properties and has area at most $[(1-\varepsilon) - (1/2 + \varepsilon)]^{-1}V_0$.

Let $\widehat{B}$ denote the region $f^{-1}[-\varepsilon, t]$ in $\widehat{M}$, with boundary $\del \widehat{B} = f^{-1}(t)$.  Let $B$ denote the image of $\widehat{B}$ in $M$, and let $S$ denote the image of $\del \widehat{B}$ in $M$, the immersed hypersurface.  Note that $\del B$ is part of $S$ but may not be all of $S$.

The proof of the modified stability lemma has the following structure, similar to the proof of the stability lemma.  We show that there is a smooth embedded hypersurface homologous to $Z$, obtained by replacing $Z \cap B$ by some subset of $S$ (and then smoothing).  Because $Z$ is minimizing up to $\delta$, we conclude
\[\Area (Z \cap B) \leq \Area S + \delta.\]
Then because $Z \cap B(p, 1/2) \subseteq Z \cap B$ and because $\Area S \leq [(1-\varepsilon) - (1/2 + \varepsilon)]^{-1}V_0$, by sending $\varepsilon$ to $0$ we obtain the desired inequality
\[\Area (Z \cap B(p, 1/2)) \leq 2V_0 + \delta.\]

In order to replace $Z \cap B$ by a subset of $S$, we claim that it suffices to find a subset $Z_1$ of $S$ in the interior of $B$ such that every loop in the interior of $B$ has the same intersection number (mod $2$) with $Z_1$ as with $Z$.  If we can do this, then we can color the regions of $B \setminus (Z \cup S)$ with two colors so that any path between regions of the same color has an even intersection number with $Z \cup Z_1$.  In particular, any two regions that border the same open subset of $Z$ must be opposite colors.  Thus, we can select one color and modify $Z$ by adding (mod $2$) the boundaries of all regions of that color.  The result is a cycle homologous to $Z$ in which all of $Z \cap B$ has been replaced by pieces of $S$.  As we add the boundaries of the regions one by one, the property of being smoothable to an embedded hypersurface is preserved, so the end result can be modified slightly to give a smooth embedded hypersurface.

Thus it suffices to find $Z_1 \subseteq S$ such that every loop in $B$ intersects $Z$ and $Z_1$ with the same parity.  It suffices to consider loops at $p$.  Every loop $\gamma$ at $p$ lifts in $\widehat{M}$ to a path $\widetilde{\gamma}$ that begins at $\widehat{p}$ and ends at another preimage $\widehat{p} + k$ of $p$, for some $k \in \mathbb{Z}$.  Because we have assumed that $Z$ is homologous to $\Sigma \times \ast$, the intersection number of the loop with $Z$ is equal to $k$ mod $2$.  Thus we would like the intersection number with $Z_1$ to be $k$ mod $2$ as well.

We construct $Z_1$ as follows.  Let $\{\widehat{B} + i\}_{i \in \mathbb{Z}}$ denote the translates of $\widehat{B}$ under the $\mathbb{Z}$--action on $\widehat{M}$.  Let $N$ denote their union, the preimage of $B$ in $\widehat{M}$:
\[N = \bigcup_{i \in \mathbb{Z}} (\widehat{B} + i).\]
We set
\[N_+ = \bigcup_{i \geq 0} (\widehat{B} + i),\]
and let $\widehat{Z_1}$ denote the boundary of $N_+$ with respect to $N$.  Let $Z_1$ be the image of $\widehat{Z_1}$ in $B$, taken with multiplicity mod $2$ so that a point $q$ of $B$ is in $Z_1$ if and only if an odd number of preimages of $q$ are in $\widehat{Z_1}$.

The intersection number of $\gamma$ with $Z_1$ is equal to the sum of the intersection numbers of the various lifts of $\gamma$ with $\widehat{Z_1}$.  Given a lift of $\gamma$ stretching from $\widehat{p} + \ell$ to $\widehat{p} + \ell + k$, its intersection number with $\widehat{Z_1}$ is odd if and only if one of the endpoints $\widehat{p} + \ell$ and $\widehat{p} + \ell + k$ is in $N_+$ and the other is not.  Suppose we color in black all integers $i$ such that $\widehat{p} + i$ is not in $N_+$, and color in white all integers $i$ such that $\widehat{p} + i$ is in $N_+$.  We claim that for any such coloring of the integers, with all sufficiently negative integers black and all sufficiently positive integers white, the number of pairs $\ell, \ell + k$ that are different colors is congruent to $k$ mod $2$.  This statement is clearly true for $k = 1$, and for $k > 1$ we partition the integers into the $k$ congruence classes mod $k$, and apply the $k = 1$ statement to each congruence class.  Adding up $k$ odd numbers, the sum is congruent to $k$ mod $2$.  Thus the intersection number of $\gamma$ with $Z_1$ is indeed $k$ mod $2$.  Using $Z_1$ we can replace $Z \cap B$ by a subset of $S$, thus implying the desired inequality.
\end{proof}

Here is the lemma about perturbing the distance function, used in the proof above.

\begin{lem}\label{lem-transverse}
 Let $\widehat{M}$ be a covering space of a closed Riemannian manifold $M$, let $Z$ be a smooth submanifold of $M$, and let $\widehat{p}$ be any point in $\widehat{M}$.  Then there exists a smooth function $f \co \widehat{M} \rightarrow \mathbb{R}$, arbitrarily close to the function $\dist(-, \widehat{p})$, such that for all $t$ in a full-measure subset of the interval $[1/2, 1]$, the level set $f^{-1}(t)$ is a smooth submanifold of $\widehat{M}$, for which the image in $M$ is immersed with transverse self-intersections and transverse intersection with $Z$.
\end{lem}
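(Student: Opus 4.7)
The plan has three steps: smooth the distance function $\rho(x) := \dist(x, \widehat{p})$, perturb the smoothing within a finite-dimensional family to achieve transversality, and apply Sard's theorem to extract the full-measure set of good values of $t$.

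Convolving $\rho$ chart-by-chart against a compactly supported mollifier via a locally finite partition of unity yields a smooth $f_0 \co \widehat{M} \to \mathbb{R}$ with $\sup_{\widehat{M}} |f_0 - \rho| < \varepsilon/2$. For the localization: because $\widehat{M}$ is complete and the deck group $\Gamma$ acts properly discontinuously, the closed ball $\overline{B}(\widehat{p}, 2)$ is compact and only a \emph{finite} set $G \subset \Gamma \setminus \{e\}$ satisfies $g\,\overline{B}(\widehat{p}, 2) \cap \overline{B}(\widehat{p}, 2) \neq \emptyset$. Any smooth $f$ with $\sup|f - \rho| < 1/2$ has every level set $f^{-1}(t)$ for $t \in [1/2, 1]$ contained in $B(\widehat{p}, 2)$, so self-intersections of its image in $M$ correspond to finite tuples $(x, g_1 x, \ldots, g_k x)$ with $g_i \in G$; since a codimension-$1$ immersion in an $(n+1)$-manifold admits at most $n+1$ sheets meeting generically, only finitely many such tuples are relevant.

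For the perturbation I would introduce a finite-dimensional family $f_s = f_0 + \sum_{i=1}^N s_i \varphi_i$ with compactly supported bumps $\varphi_i$ whose values at the translates $\{g\, x : g \in G \cup \{e\}\}$, chosen using compactness of $\overline{B}(\widehat{p}, 2)$, span an appropriate space at each point $x$. This is arranged so that for every tuple $(g_1, \ldots, g_k)$ in $G$ the map $(x, s) \mapsto (f_s(x), f_s(g_1 x), \ldots, f_s(g_k x))$ is a submersion onto $\mathbb{R}^{k+1}$ near the diagonal $\Delta_{k+1}$. Thom's parametric transversality theorem applied to each such tuple, and to the restriction $f_s|_{\pi^{-1}(Z)}$, shows that the set of parameters $s$ for which every corresponding transversality condition holds is of full measure. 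Picking one such $s$ with $|s| < \varepsilon/2$ and setting $f := f_s$, we get $\sup|f - \rho| < \varepsilon$, and for each relevant tuple $\{x : f(x) = f(g_1 x) = \cdots = f(g_k x)\}$ is a smooth codimension-$k$ submanifold of $\widehat{M}$.

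A final application of Sard's theorem to $f$, to $f|_{\pi^{-1}(Z)}$, and to the projection to $\mathbb{R}$ of each such submanifold produces the full-measure subset $T \subset [1/2, 1]$ of values $t$ for which $f^{-1}(t)$ is a smooth hypersurface whose image in $M$ meets itself and $Z$ transversely. The main technical obstacle is constructing the perturbation family rich enough to enforce all transversality conditions simultaneously; the key simplification is that proper discontinuity of the deck action together with compactness of $\overline{B}(\widehat{p}, 2)$ reduce an a priori infinite collection of conditions to a finite one.
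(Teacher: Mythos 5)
Your argument is correct but routes around the paper's main tool. The paper invokes the Multijet Transversality Theorem (Golubitsky--Guillemin, Ch.~II, Thm.~4.13) directly: for each finite subset $\{k_1,\ldots,k_s\}$ of the finite set $E$ of deck transformations whose translates of $B(\widehat p, 2)$ can meet $B(\widehat p,2)$, it defines a submanifold $W \subset \widehat{M}^{(s)}\times\mathbb{R}^s$ (and a second one restricted over $\pi^{-1}(Z)$) and concludes that the set of $f \in C^\infty(\widehat M,\mathbb{R})$ with $j^0_s f$ transverse to all these $W$ is comeager, hence contains functions arbitrarily near $\dist(-,\widehat p)$. You instead build the transversality by hand: mollify $\dist(-,\widehat p)$ to get $f_0$, then adjoin a finite-dimensional family $f_s = f_0 + \sum s_i\varphi_i$ of bump perturbations chosen (via compactness of $\overline{B}(\widehat p,2)$ and the fact that $x, g_1x, \ldots, g_kx$ are distinct for distinct nontrivial $g_i$) to make the evaluation map $(x,s)\mapsto(f_s(x),\ldots,f_s(g_kx))$ a submersion onto $\mathbb{R}^{k+1}$ near the diagonal, and finish with Thom parametric transversality plus Sard. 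Both arguments hinge on the same finiteness observation about $E$; the paper's version is shorter because the multijet theorem hides the perturbation-family and Sard steps inside its proof, while yours is more elementary and self-contained. One small stylistic point: your remark that a codimension-$1$ immersion generically has at most $n+1$ sheets meeting is true of the \emph{output} but is not needed and is a bit circular as an input; the finiteness of tuples you must handle already follows from $|E| < \infty$. You might also spell out slightly more that transversality of the image of $f^{-1}(t)$ to $Z$ in $M$ means transversality of each translate $g f^{-1}(t)$, and of each multi-translate intersection, to $\pi^{-1}(Z)$ -- your ``restriction $f_s|_{\pi^{-1}(Z)}$'' alludes to this but is terse.
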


\begin{proof}
 This lemma is a direct application of the Multijet Transversality
 Theorem as given in \cite[Chapter II, Theorem 4.13]{Golubitsky74}.  Here is the statement of the theorem in the case we need here.  For any natural number $s$, let $\widehat{M}^{(s)}$ denote the space
\[\widehat{M}^{(s)} = \{(x_1, \ldots, x_s) \in \widehat{M}^s : x_i \neq x_j\},\]
and for any function $f \co \widehat{M} \rightarrow \mathbb{R}$, its $s$--fold $0$--jet $j_s^0f$ is the function
\[j_s^0f \co \widehat{M}^{(s)} \rightarrow \widehat{M}^{(s)} \times \mathbb{R}^s\]
defined by
\[j_s^0f(x_1, \ldots, x_s) = (x_1, \ldots, x_s, f(x_1), \ldots, f(x_s)).\]
The theorem states that for any submanifold $W$ of $\widehat{M}^{(s)} \times \mathbb{R}^s$, the set of all $f \in C^\infty(\widehat{M}, \mathbb{R})$ such that $j_s^0f$ is transverse to $W$ is a comeager set.  (Comeager sets are dense, and any countable intersection of them is also dense.)  We apply the theorem repeatedly with various choices of $s$ and $W$, and select $f$ to satisfy all the transversality properties at once.

Let $G$ be the group of deck transformations of the covering space; in the modified stability lemma the group is $\mathbb{Z}$, so we use additive notation.  We want to check that wherever the $G$--translates of $f^{-1}(t)$ intersect, they are transverse.  If we suppose that $f$ is within $1$ of $\dist(-, \widehat{p})$ and $t \leq 1$, there are only finitely many translates of $f^{-1}(t)$ that could possibly intersect $f^{-1}(t)$; they are the translates by $k \in G$ such that $B(\widehat{p}, 2) + k$ intersects $B(\widehat{p}, 2)$.  Let $E$ be this finite set of translations $k$, including the identity.

Let $S$ be an arbitrary subset of $E$, with elements $k_1, \ldots, k_s$.  We set $W$ to be the set
\[W = \{(x + k_1, \ldots, x + k_s, t, \ldots, t) : x \in \widehat{M}, t \in \mathbb{R}\} \subseteq \widehat{M}^{(s)} \times \mathbb{R}^s.\]
If $j_s^0f$ is transverse to $W$ and $t$ is a regular value of the restriction of $f$ to the preimage of $W$, then the translates of $f^{-1}(t)$ by $-k_1, \ldots, -k_s$ are submanifolds transverse to each other.  Similarly, if we set 
\[W = \{x + k_1, \ldots, x + k_s, t, \ldots, t) : x \in \widehat{M} \text{ lies above } Z,\  t \in \mathbb{R}\},\]
then if $j_s^0f$ is transverse to $W$ and $t$ is a regular value, then the same translates of $f^{-1}(t)$ are (together) transverse to the preimage of $Z$ in $\widehat{M}$.

Applying the theorem for each subset of $E$, with and without $Z$, we obtain a dense set of functions $f$ for which a full-measure set of values $t$ satisfy the desired transversality conditions.
\end{proof}

\section{Proof of the modified volume theorem}\label{sect:proof}

In this section we show where to modify Guth's proof of the volume theorem (Theorem~\ref{thm-vol}) to get the modified volume theorem (Lemma~\ref{modified-volume}).  The proof of the volume theorem requires many steps, and we do not repeat them here.  We only point out the small difference.

Here we repeat the statements of the volume theorem and the modified volume theorem.  The volume theorem is about a different metric on a hyperbolic manifold.

\begin{repthm}{thm-vol}[Volume theorem]
 Let $(M, \hyp)$ be an $n$--dimensional closed hyperbolic manifold and let $g$ be another metric on $M$. Suppose that
\[V_{(\widetilde{M}, \widetilde{g})}(1) \leq V_{\mathbb{H}^n}(1),\]
where $\mathbb{H}^n$ is the $n$--dimensional hyperbolic
 space.  Then we have
\[\Vol(M, g) \geq \const(n) \cdot \Vol(M, \hyp).\]
\end{repthm}

The modified volume theorem is about a different manifold mapping to a hyperbolic manifold.

 \begin{replem}{modified-volume}[Modified volume theorem]
 Let $\Sigma$ be an $n$--dimensional closed hyperbolic manifold, and let $(Z, g)$ be an $n$--dimensional closed Riemannian manifold.  Suppose that $f \co (Z, g) \rightarrow \Sigma$ is a degree $1$ map such that $f(\gamma)$ is contractible for every loop $\gamma \subset Z$ of length less than $1$, and suppose that
\[V_{(Z, g)}(1/2) \leq V_0.\]
Then we have
\[\Vol (Z, g) \geq \const(n, V_0) \cdot \Vol(\Sigma, \hyp).\]
\end{replem}

The modified volume theorem is a generalization of the volume theorem. In fact, suppose that $M$ is a closed hyperbolic manifold
satisfying the hypothesis of the volume theorem, and we want to apply the modified volume theorem to obtain the conclusion of the volume theorem. As in
\cite[page 71]{Guth11} we take a finite cover of $M$ so that every loop
of length less than $2$ is contractible as follows. There
are only finitely many homotopy classes of loop with lengths less than
$2$. Since $M$ admits a hyperbolic metric, $\pi_1(M)$ is
residually finite. Hence these homotopy classes can be excluded after
taking a finite cover of $M$. We can scale the cover by $1/2$ so that the identity map of the cover
satisfies the hypothesis of the modified volume theorem.  We therefore get the
conclusion of the volume theorem for the cover. Since the volume
of the cover is the volume of $M$ times the degree of the cover, we
obtain the volume theorem.

In \cite{Guth11}, Guth proves the volume theorem from two lemmas: his Lemma~7 and his Lemma~9.  We modify his Lemma~7 to get our Lemma~\ref{lem:nerve}, but there is no change to Lemma~9 and essentially no change to how the two lemmas combine to prove the theorem.

To explain the setting for Lemma~7 of~\cite{Guth11},  we explain the rectangular nerve associated with a closed Riemannian manifold and its finite cover of balls. The rectangular
nerve was introduced by Guth in \cite{Guth11} and plays an important role in
the proof of Lemma \ref{modified-volume}.

Let $(Z,g)$ be a closed Riemannian manifold. Suppose that $B_i$,
$i=1,2,\cdots, D$ are closed balls in $Z$ such that the concentric balls
$\frac{1}{2}B_i$ cover $Z$ (here $\frac{1}{2}B_i$ is the ball with the
same center and half the radius of $B_i$). For such a cover $\{ B_i\}$ we define the
associated \emph{rectangular nerve} $N$ as follows. The nerve $N$ is a closed subcomplex of the rectangle $\prod_{i=1}^D [\, 0,r_i\,]$, where $r_i$ is
the radius of the ball $B_i$. Denote by $\phi_i$, $i=1,2,\cdots,D$ the coordinate function
of the rectangle. An open face $F$ of the rectangle is determined by
dividing the set $\{1,2,\cdots, D \}$ into three categories $I_0$,
$I_1$, and $I_{(\, 0,1\,)}$. The face $F$ is given by the equalities and
inequalities $\phi_i=0$ for $i\in I_0$, $\phi_i=r_i$ for $i\in I_1$, and
$0<\phi_i<r_i$ for $i\in I_{(\,0,1\,)}$. We denote $I_+:=I_1\cup I_{(\,0,1 \,)}$. A face $F$ is contained in the
nerve $N$ iff $\bigcap_{i\in I_+ (F)}B_i \neq \emptyset$ and if
$I_1(F)\neq \emptyset$.

We can define a natural map from $Z$ to the rectangular nerve $N$, which we call
$\phi:Z\to N$. Each $i$-th component of $\phi$ is a map $\phi_i:Z\to
[\,0,r_i\,]$ supported on $B_i$. We set $\phi_i=r_i$ on $\frac{1}{2}B_i$
and $\phi_i$ decreases to zero as $x$ approaches $\partial B_i$. The
function $\phi_i$ is chosen so that it is piecewise smooth and the
Lipschitz constant is at most $2$. Taking $\phi_i$ as a component we
define $\phi:Z\to \prod_{i=1}^{D}[\,0,r_i\,]$. The image
$ \phi(Z) $ lies in the rectangular nerve $N$.

Throughout this section we fix a special finite cover of balls $\{
B_i\}$ of $Z$, called a \emph{good
cover}, and consider the associated rectangular nerve $N$ and the natural map $\phi:Z\to N$.
We omit the definition of a good cover because we do not use the
definition throughout this paper. Refer to \cite{Guth11} for the precise
definition. Any ball in a good cover has radius at most $1/100$. 

Here is the original statement of Lemma~7 from~\cite{Guth11}.

\begin{lem}[{\cite[Lemma~7]{Guth11}}]\label{lem:original7}
Let $(M, g)$ be a closed aspherical manifold such that every loop of length at most $1$ is null-homotopic, and let $\phi \co M \rightarrow N$ be the map to the rectangular nerve $N$ constructed as above from a good cover of $M$.  Then there is a map $\psi \co N \rightarrow M$ so that the composition $\psi \circ \phi \co M \rightarrow M$ is homotopic to the identity.
\end{lem}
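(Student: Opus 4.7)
The plan is to construct $\psi$ cell by cell on the rectangular nerve $N$, using the asphericity of $M$ to kill obstructions in dimensions $\geq 2$ automatically and the short-loop hypothesis to handle the only nontrivial obstruction, which lives in $\pi_1(M)$ when extending over $2$-cells. Having built $\psi$, I would then produce the homotopy $\psi \circ \phi \simeq \id_M$ by the same obstruction-theoretic scheme applied to the cells of a sufficiently fine triangulation of $M$.

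Explicitly, to each face $F$ of $N$ is attached an index set $I_+(F) = I_1(F) \cup I_{(0,1)}(F)$ for which $\bigcap_{i \in I_+(F)} B_i \neq \emptyset$. For each vertex $v$ of $N$, I would pick a point $\psi(v) \in \bigcap_{i \in I_1(v)} B_i$. For each edge $e$ lying in some face $F$, its two endpoints get sent into balls indexed by $I_+(F)$, which share a common point and have radius at most $1/100$, so their images lie within a small distance and can be joined by a short minimizing path in $M$. For each $2$-cell, its $\psi$-image boundary is therefore a loop contained in a bounded union of small balls sharing a common point, hence a loop of length well below $1$; by hypothesis it bounds a disk, so $\psi$ extends continuously across the $2$-cell. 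For $k$-cells with $k \geq 3$, the obstruction to extension lies in $\pi_{k-1}(M) = 0$, so $\psi$ extends to all of $N$.

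To verify $\psi \circ \phi \simeq \id_M$, I would use the fact that for every $x \in M$ the point $\phi(x)$ lies in the face $F$ with $I_+(F) = \{i : x \in B_i\}$, so $\psi(\phi(x))$ lies in an intersection of balls of the cover that also contains $x$; in particular $\psi \circ \phi$ moves each point by a very small distance. Choosing a triangulation of $M$ whose cells are much smaller than the good-cover radius, I would build a straight-line-style homotopy skeleton by skeleton: short paths on $0$-cells, null-homotopies of short loops on $2$-cells, and asphericity on higher cells.

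The main obstacle I anticipate is the quantitative bookkeeping: one must check that every loop arising as the $\psi$-image of the boundary of a $2$-cell, during either the construction of $\psi$ or the construction of the homotopy, genuinely has length at most $1$, since that is the only input available for controlling $\pi_1(M)$. Making this rigorous requires unpacking the precise properties of a good cover in the sense of \cite{Guth11}, in particular the radius bound $1/100$, the combinatorial pattern of intersections that forces the rectangular faces of $N$ to be small, and the Lipschitz-$2$ bound on each coordinate $\phi_i$, so that these estimates combine to keep the perimeters of the relevant cells well under $1$.
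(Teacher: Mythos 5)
Your plan follows the same skeleton-by-skeleton obstruction-theoretic strategy as the paper's proof of its modified Lemma~7 (Lemma~\ref{lem:nerve}), which specializes to the present statement by taking $Z = \Sigma = M$ and $f = \id_M$. The construction of $\psi$ on the cells of the rectangular nerve is sound: vertices of $N$ go to points of the nonempty intersections, edges go to paths of length at most $2/100$, the boundary of each $2$--cell maps to a loop of length at most $8/100 < 1$ (because $I_1(F) \neq \emptyset$ anchors all four corners in a common ball $B_j$ of radius at most $1/100$), and asphericity handles the higher cells. This matches the paper's argument up to cosmetic choices: the paper uses a fine simplicial subdivision of $N$ rather than its box-cell structure, and it sends vertices to ball centers rather than to arbitrary points of the intersections.

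The gap is in the homotopy $\psi \circ \phi \simeq \id_M$. Knowing that $\psi \circ \phi$ moves each point a small distance controls the $0$--skeleton of the homotopy, but to fill in the square $e \times [0,1]$ over an edge $e$ of a fine triangulation of $M$, you need the curve $(\psi \circ \phi)(e)$ to have controlled \emph{length}, and this does not follow from pointwise closeness: as $\phi$ moves across many faces of $N$ along $e$, the composite curve $\psi(\phi(e))$ could be long and wiggly while remaining near $e$, and the short-loop hypothesis only kills loops of length at most $1$, not loops contained in small balls. The paper sidesteps this by first homotoping $\phi$ to a simplicial map $\phi'$ with respect to fine triangulations of $M$ and $N$, so that $\phi'(e)$ is a point or a single $1$--simplex of $N$ and hence $\psi' \circ \phi'(e)$ is a point or a curve of length at most $2/100$; each square boundary then maps to a loop of length at most $7/100$, which the hypothesis contracts. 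It also carefully tracks that $\psi'(\phi'(v))$ and $v$ lie in overlapping balls of the cover, so they are joined by a path of length at most $2/100$. Adding this cellular/simplicial approximation step (or, alternatively, building $\psi$ with explicit Lipschitz bounds) is what your proposal needs to close the loop-length bookkeeping you correctly flagged as the main obstacle.
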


In the modified lemma, we take the good cover and rectangular nerve on one manifold $Z$, but the aspherical space without short nontrivial loops is another manifold $\Sigma$.  The proof is essentially the same.

\begin{lem}[Modified Lemma~7]\label{lem:nerve}  Let $(Z,g)$ be a closed Riemannian manifold, and let $\phi \co Z \rightarrow N$ be the map to the rectangular nerve $N$ constructed as above from a good cover of $Z$.  Let $\Sigma$ be a closed aspherical manifold, and suppose that a
 continuous map $f \co
 Z \rightarrow \Sigma$ has the property that for every loop $\gamma$ in $Z$
 of length at most $1$, the image $f(\gamma)$ in $\Sigma$ is null-homotopic.
 Then there is a map $\psi \co N \rightarrow \Sigma$ such that the
 composition $\psi \circ \phi \co Z \rightarrow \Sigma$ is homotopic to $f$.
 \begin{proof}The proof is a slight modification of
  \cite[Lemma 7]{Guth11} which is based on Gromov's argument
  in \cite{Gromov82}. We repeat the argument here for the completeness of
  this paper.

  We slightly homotope $\phi$ to get $\phi'$ which is
  simplicial with respect to some fine triangulations of $Z$ and $N$. We take
  the triangulation of $N$ so that it is a subdivision of the faces of $N$. 

  To construct $\psi$ we construct a $1$-skeleton map $\psi'$ from $N$
  to $Z$ just as in \cite{Guth11} and then compose it with $f$ to get a
  $1$-skeleton map from $N$ to $\Sigma$. We then extend the map $f\circ \psi'$ one
  skeleton at a time by using the hypothesis of the lemma and the
  resulting map will be $\psi$. Given any vertex $v\in N$ of the
  fine triangulation we take the
  smallest (open) face $F$ of $N$ that
  contains $v$. We pick an index in $I_+(F)$, say $i$, and define
  $\psi'(v)$ as the center of the ball $B_i$. Next let $e$ be a $1$-simplex
  in the triangulation of $N$. Since its endpoints lie in the same
  closed face of $N$ these endpoints are mapped by $\psi'$ to the centers of two overlapping balls of our
  good cover. Since the radii of the balls are at most $1/100$ the
  distance between these centers is at most $2/100$. We define $\psi'$
 to map $e$ to a
  curve in $Z$ joining the centers whose length is at most $2/100$.
  Now the boundary of each $2$-simplex of the triangulation of $N$ is
  mapped to a loop of length at most $6/100$ and the image of the
  loop under $f$ is contractible in $\Sigma$ from the hypothesis of $f$. Hence we can extend $f\circ
  \psi'$ to each $2$-simplex. Since $\Sigma$ is aspherical we can then extend
  the map to each higher-dimensional simplex. Now the map $\psi$ is constructed.

Similarly, to construct the homotopy $H$ between $f$ and $\psi\circ
  \phi$, we construct a $1$-skeleton map $H'$ to $Z$ just as in
  \cite{Guth11} and then compose with $f$ to get a $1$-skeleton map to
  $\Sigma$ that can be filled in to get $H$. For any vertex $v$ in the
  triangulation of $Z$ we put
  $H'(v,0):= \psi'\circ \phi'(v)$ and $H'(v,1):=v$. Recall that
  $\phi'(v)$ is a vertex in the triangulation of $N$ which is very close
  to $\phi(v)$.
  If $F$ is the
  smallest open face of $N$ containing $\phi'(v)$ then $\phi(v)$ may not be in
  $F$ but at least $\phi(v)$ is in a face bordering $F$. Hence $\psi'
  \circ \phi' (v)$ is the center of some ball $B_i$
  overlapping the other ball $B_j$ containing $v$. Thus $\psi'\circ
  \phi'(v)$ and $v$ can be connected by a curve of length at most
  $2/100$, and we define $H'$ on $v\times (\,0,1\, )$ by mapping the
  interval to the curve. Now let $e$ be a $1$-simplex in the
  triangulation of $Z$. Since our triangulation of $Z$ is fine we may
  assume that the length of $e$ is at most $1/100$. The image $\phi'(e)$ is either a point or a
  $1$-simplex of the triangulation of $N$. Hence the image $\psi'\circ
  \phi'(e)$ is either a point in $Z$ or a curve of length at most
  $2/100$. We have defined $H'$ on the boundary of $e\times
  (\,0,1\,)$. The image of the boundary under $H'$ is a loop in $Z$ of
  length at most $7/100$. Since the image of this loop under $f$ is
  contractible in $\Sigma$ we can extend $f \circ H'$ to $e\times (\,0,1\,)$
  for all $1$-skeleton $e$ and denote this extension as $H$. Since $\Sigma$
  is aspherical we can extend $H$ to $\Sigma \times (0,1)$ for each
  higher-dimensional simplex $\Sigma$. This $H$ is a homotopy between
  $\psi\circ \phi'$ and $f$. Since $\phi'$ is homotopic to
  $\phi$, $\psi \circ \phi$ is homotopic to $f$. This completes the proof.
  \end{proof}
 \end{lem}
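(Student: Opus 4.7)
The plan is to mimic Guth's proof of Lemma~7 in \cite{Guth11}, which itself follows Gromov's argument in \cite{Gromov82}, but to build the extension into $\Sigma$ rather than into $Z$, using the hypothesis on $f$ at exactly the point where Guth's argument uses the hypothesis that short loops in the ambient manifold are null-homotopic. As a preliminary step I would slightly homotope $\phi$ to a simplicial map $\phi'$ with respect to sufficiently fine triangulations of $Z$ and $N$, with the triangulation of $N$ chosen to refine its rectangular face structure. This smoothing step is unchanged from \cite{Guth11}.

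Next I would construct $\psi$ skeleton-by-skeleton. On the $0$-skeleton of $N$, for each vertex $v$ I let $F$ be the smallest open face containing $v$, pick an index $i\in I_+(F)$, and send $v$ to $f(c_i)$, where $c_i$ is the center of $B_i$. For each edge $e$ in the triangulation of $N$, the endpoints lie in a common closed face, so the two chosen indices $i,j$ satisfy $B_i\cap B_j\neq\emptyset$; since every ball of a good cover has radius at most $1/100$, the centers $c_i,c_j$ are joined by a curve in $Z$ of length at most $2/100$, and $f$ of this curve extends $\psi$ over $e$. This amounts to defining a $1$-skeleton map $\psi' \co N^{(1)}\to Z$ and then setting $\psi = f\circ \psi'$ on $N^{(1)}$.

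The crucial step is extending to the $2$-skeleton. The boundary of each $2$-simplex of $N$ is the image under $\psi'$ of a loop $\gamma$ in $Z$ of length at most $6/100 < 1$; by the hypothesis of the lemma, $f(\gamma)$ is null-homotopic in $\Sigma$, so $\psi$ extends across the $2$-cell. Across higher-dimensional simplices the extension is automatic because $\Sigma$ is aspherical. Finally, to homotope $\psi\circ\phi$ to $f$ I would repeat the same strategy on $Z\times[0,1]$: build a $1$-skeleton map $H' \co (Z\times\{0,1\})\cup (Z^{(1)}\times[0,1])\to Z$ with $H'(v,0)=\psi'(\phi'(v))$ and $H'(v,1)=v$, interpolating by curves in $Z$ of length at most $2/100$ (using that $\phi'(v)$ lies in or on the boundary of the smallest face of $N$ containing $\phi(v)$); then $f\circ H'$ extends across $2$-cells because each $2$-cell boundary has $Z$-length at most $7/100 < 1$, and across higher cells by asphericity.

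The main mathematical content is the observation that wherever Guth invokes the property ``every loop of length at most $1$ in $M$ is null-homotopic in $M$'', the weaker property ``$f$ of every loop of length at most $1$ in $Z$ is null-homotopic in $\Sigma$'' suffices, because every loop appearing in the construction comes either from the boundary of a $2$-cell of $N$ mapped into $Z$ by $\psi'$, or from the analogous boundary loop in the homotopy $H'$. The only obstacle I anticipate is careful bookkeeping of the constants $1/100$, $2/100$, $6/100$, $7/100$ to confirm that every such loop has length less than $1$; no new geometric idea is required beyond this replacement of the identity of $M$ by the map $f$.
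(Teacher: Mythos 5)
Your proposal follows the paper's proof essentially verbatim: you homotope $\phi$ to a simplicial $\phi'$, build the $1$-skeleton map $\psi'$ into $Z$ by sending vertices to ball centers and edges to short curves, compose with $f$, extend over $2$-cells using the hypothesis on $f$ applied to boundary loops of length at most $6/100$ (and $7/100$ for the homotopy), and extend over higher cells by asphericity. The constants, the construction of $H'$, and the key observation that $f$ of short loops replaces Guth's condition all match the paper exactly.
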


In~\cite{Guth11} Guth finishes proving the volume theorem (Theorem~\ref{thm-vol}) from Lemma~7 and Lemma~9 of that paper.  In the remainder of this section we repeat the same steps to prove the modified volume theorem (Lemma~\ref{modified-volume}), with Lemma~7 of~\cite{Guth11} replaced by our version, Lemma~\ref{lem:nerve}.  To state Lemma~9 of~\cite{Guth11} and prove the modified volume theorem we recall Gromov's simplicial norm, which
was introduced in \cite{Gromov82}.
The simplicial norm is defined as follows. For every singular chain
$c$ on $Z$, the \emph{simplicial $\ell_1$ norm} of $c$, denoted $\|c\|$, is the sum of absolute
values of the (real) coefficients. For every real homology class
$h$, the \emph{simplicial norm} of $h$, denoted $\|h \|$, is the infimum of
$\| c \|$ over all cycles $c$ representing $h$.

Two facts about the simplicial norm which will be used in the proof of
Lemma \ref{modified-volume} are the following:
\begin{itemize}
 \item(Contractivity under mappings) For any continuous map $\phi:\Sigma\to \Sigma'$
      between two spaces, $\|\phi_{\ast}(h)\|\leq \|h\|$.
 \item(Gromov--Thurston's theorem, see \cite{Gromov82}) If $(\Sigma,\hyp)$ is a closed $n$-dimensional
      hyperbolic manifold then the simplicial volume of $\Sigma$ is
      equal to $\const(n) \Vol(\Sigma,\hyp)$ for a certain constant $\const(n)$.
 \end{itemize}The first item implies Euclidean spheres and tori have
      zero simplicial volume, because they have self-maps of nonzero degree. From Gromov--Thurston's theorem we can associate the simplicial volume of
a hyperbolic closed surface $\Sigma_g$ with its genus. In fact it is
known that $\| \Sigma_g \|=-2\chi(\Sigma_g)$. We will use these
computations in the appendix.

Here is Lemma~9 from~\cite{Guth11}.

 \begin{lem}[{\cite[Lemma 9]{Guth11}}]\label{lem:lem9 in larry}For each dimension $n$ and each number $V_0>0$, there is a
  constant $C(n,V_0)$ so that the following statement holds. If an
  $n$-dimensional closed Riemannian manifold $(Z,g)$ satisfies $V_{(Z,g)}(1/2)<V_0$, then we have $\|\phi_{\ast}([Z]) \|\leq C(n,V_0)\Vol(Z,g)$, where $\phi \co Z \rightarrow N$ is the map to the rectangular nerve $N$ associated to a good cover of $Z$. 
  \end{lem}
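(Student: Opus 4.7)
The plan is to realize $\phi_*([Z])$ as a cellular cycle on the natural CW-structure of $N$, bound its weights in terms of $\Vol(Z,g)$ via the area formula, and then pass to a simplicial refinement.

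Each coordinate function $\phi_i\co Z\to[0,r_i]$ is $2$-Lipschitz and supported in $B_i$, and an $n$-face of $N$ is an axis-parallel rectangle $F=\prod_{i\in I_{(0,1)}(F)}(0,r_i)$. Since the image $\phi(Z)$ is $n$-dimensional, the class $\phi_*([Z])\in H_n(N;\mathbb{R})$ admits a cellular representative $\sum_F a_F[F]$ summed over $n$-faces, with $a_F$ equal to the signed count of preimages at a regular value $q\in F^\circ$. Restricted to $\phi^{-1}(F^\circ)$, the map $\phi$ projected onto the $n$ coordinates in $I_{(0,1)}(F)$ is a smooth map between $n$-manifolds with Jacobian of absolute value at most $C_n:=(2\sqrt{n})^n$, since each $\phi_i$ is $2$-Lipschitz. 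The area formula therefore gives
\[
|a_F|\cdot\Vol(F)\ \leq\ \int_{F^\circ}\#\phi^{-1}(q)\,dq\ \leq\ C_n\,\Vol(\phi^{-1}(F^\circ)).
\]

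Because the open $n$-faces of $N$ are pairwise disjoint, so are their preimages in $Z$, and summing over $F$ yields
\[
\sum_F|a_F|\cdot\Vol(F)\ \leq\ C_n\,\Vol(Z,g).
\]
The key geometric input from the definition of a good cover is that the radii satisfy $r_i\geq r_0(n,V_0)>0$, forcing $\Vol(F)\geq r_0^n$ for every $n$-face $F$. Consequently
\[
\sum_F|a_F|\ \leq\ C_n\,r_0^{-n}\,\Vol(Z,g).
\]
Finally, triangulating each $n$-rectangle $F$ into its $n!$ standard Euclidean simplices refines $\sum_F a_F[F]$ into a singular cycle whose $\ell_1$-norm is at most $n!\sum_F|a_F|$, giving
\[
\|\phi_*([Z])\|\ \leq\ n!\,C_n\,r_0(n,V_0)^{-n}\,\Vol(Z,g)\ =:\ C(n,V_0)\,\Vol(Z,g).
\]

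The main obstacle is the radius lower bound $r_i\geq r_0(n,V_0)$: this does not follow from the ball-volume hypothesis $V_{(Z,g)}(1/2)<V_0$ alone, but is built into Guth's definition of a good cover (which the paper black-boxes). Once that is accepted as input, the argument above is essentially a bookkeeping application of the area formula together with the pairwise disjointness of open cells in $N$.
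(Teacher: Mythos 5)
This lemma is not proved in the paper under review; it is Guth's Lemma~9, cited from~\cite{Guth11} and used as a black box (the paper explicitly declines even to define ``good cover''). So there is no in-paper proof to compare against; your reconstruction must be judged on its own and against Guth's original.

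The outline --- represent $\phi_*([Z])$ by a weighted cellular $n$-cycle $\sum_F a_F[F]$, bound each $|a_F|\cdot\Vol(F)$ by the area formula using the $2$-Lipschitz bound on the $\phi_i$, sum using pairwise disjointness of open $n$-faces, and divide by a lower bound on face volumes --- is the right shape. But the load-bearing step, $\Vol(F)\ge r_0(n,V_0)^n$ via a claimed uniform lower bound $r_i\ge r_0(n,V_0)$, is asserted as a feature of the good cover rather than argued, and it is not self-evidently true: $V_{(Z,g)}(1/2)<V_0$ does not forbid a good cover from containing balls of arbitrarily small radius (picture a small-scale feature glued onto a larger manifold), so what actually saves the estimate --- a multiplicity bound, or a volume lower bound on precisely those $n$-faces carrying nonzero $a_F$, rather than a per-ball radius bound --- is the real geometric content, and your proof imports it silently from~\cite{Guth11} without checking that this is what the good-cover construction supplies. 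Two smaller points: the Jacobian bound should be $2^n$ by Hadamard rather than $(2\sqrt n)^n$ (harmless); and the identification of $a_F$ with the signed preimage count over a regular value of $\phi$ in $F^\circ$ needs a word of justification, since $\phi$ is not cellular and $N$ has dimension far larger than $n$ --- the clean reading is $a_F=\deg\bigl(Z\xrightarrow{\phi}N\to N/(N\setminus F^\circ)\cong S^n\bigr)$, and this is exactly the point at which Guth instead deforms $\phi(Z)$ into the $n$-skeleton via Federer--Fleming as a more robust substitute.
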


Using this lemma, we can finish the proof.

\begin{proof}[Proof of Lemma \ref{modified-volume}]
We choose a good cover on $Z$ and let $\phi \co Z \rightarrow N$ be the map to the rectangular nerve $N$.  Lemma~\ref{lem:lem9 in larry} says that we have
\[\|\phi_{\ast}([Z]) \|\leq C(n,V_0)\Vol(Z,g).\]
Then because we have assumed that the map $f \co Z \rightarrow \Sigma$ has degree $1$, Lemma~\ref{lem:nerve} gives
\[\psi_*(\phi_*([Z])) =  \deg f \cdot [\Sigma] = [\Sigma],\]
so contractivity of simplicial norm under mappings gives
\[\Vert [\Sigma] \Vert = \Vert \psi_*(\phi_*([Z])) \Vert \leq \Vert \phi_*([Z]) \Vert,\]
and Gromov--Thurston theorem gives
\[\Vol(\Sigma, \hyp) \leq \const(n) \cdot \Vert [\Sigma] \Vert.\]
Stringing the inequalities together we get
\[\Vol(\Sigma, \hyp) \leq \const(n, V_0) \cdot \Vol(Z, g),\]
completing the proof.
\end{proof}

\section{Codimension greater than one}\label{sect:codim 2}

In this section we prove Proposition~\ref{prop:codim2}, which says that the analogue of the area theorem (Theorem~\ref{thm-homology}) is false if we replace the $\mathbb{S}^1$ factor by a manifold of dimension greater than $1$.  This construction was found in conversation with Larry Guth.  We begin by repeating the statement of the proposition.

\begin{repprop}{prop:codim2}
Let $\Sigma$ be a closed hyperbolic manifold of dimension $n$, and let $X$ be any closed manifold of dimension $m \geq 2$.  There exists a constant $V_0 = 2\cdot V_{\mathbb{R}^{n+m}}(1)$ such that the following is true.  For any $\varepsilon > 0$ there is a Riemannian metric $g$ on $\Sigma \times X$ such that the volumes of unit balls in the universal cover satisfy
\[V_{(\widetilde{\Sigma \times X}, \widetilde{g})}(1) < V_0,\]
but simultaneously there is a submanifold isotopic to $\Sigma \times \ast$ with $n$--dimensional volume less than $\varepsilon$.
\end{repprop}

\begin{proof}
We think of 
$\Sigma \times X$ as a bundle over $X$ with fibers equal to $\Sigma$.  The metric we construct has the form $\lambda(x) \hyp \times g_X$ for some $\lambda \co X \rightarrow \mathbb{R}_{> 0}$, meaning that we choose a Riemannian metric $g_X$ on the base $X$ and then the fiber over each $x \in X$ is $\Sigma$ with its hyperbolic metric scaled by $\lambda(x)$.  If $\lambda$ is close to constant and all short loops in $X$ are null-homotopic, then the volume of a unit ball in $\widetilde{\Sigma \times X}$ is approximately equal to the product of the volumes of the unit ball in $\lambda$--scaled hyperbolic space and the unit ball in $X$.

The construction goes as follows.  First we scale an arbitrary Riemannian metric on $X$ so that $X$ is very big and flat, and all loops in $X$ of length at most $1$ are null-homotopic.  Then we replace a ball of radius $2$ in $X$ by the graph of an approximation to the delta function, something tall and skinny which we call the finger.  We choose $\lambda_{\min} > 0$ small enough that $(\Sigma, \lambda_{\min} \cdot \hyp)$ has volume less than $\varepsilon$, and set $\lambda(x) = \lambda_{\min}$ for $x$ at the tip of the finger.  We choose $\lambda_{\max}$ large and set $\lambda(x) = \lambda_{\max}$ for all $x$ outside the finger, so that the metric is close to Euclidean outside the finger.  Then along the length of the finger we change $\lambda$ very slowly between $\lambda_{\max}$ and $\lambda_{\min}$ as a function of the height.

If the finger is skinny enough, then the ball centered at the tip of the finger is not too large in volume.  (Here is where we use the assumption $m \geq 2$.)  And, if the finger is tall enough and $\lambda$ changes gradually enough (specifically, we want to bound the Lipschitz constant of $\log \lambda$), then the balls centered along the length of the finger are not too large.
\end{proof}

{\it Acknowledgments.} The authors would like to express our appreciation to Professor
Larry Guth for his guidance, valuable suggestions, and fruitful
conversations during the preparation of this paper. He introduced us to his papers~\cite{Guth11} and~\cite{Guth10} and suggested the problem of proving the area theorem (Theorem~\ref{thm-homology}).  He kindly allowed us to add Proposition \ref{prop:codim2} in this paper and encouraged us which we also would like to thank. Without these contributions this work would have never been completed.  We also thank the referee for a friendly and prompt report and a useful expository suggestion.

\bibliography{reference}{}
\bibliographystyle{amsalpha}

\section{Appendix}

We say that a closed minimal submanifold in a Riemannian manifold is
\emph{stable} if the second derivative of the area functional is nonnegative for
all variations. If the submanifold has codimension one and has a trivial
normal bundle, then the condition of stability can be simplified
into one of functions on the submanifold. 
 \begin{prop}[{The stability inequality}]Let $M$ be a $3$-dimensional
  Riemannian manifold. Suppose that $\Sigma\subseteq M$ is a stable minimal
  hypersurface with trivial normal bundle. Then
  \begin{align*}
   \int_{\Sigma}  | \nabla \phi |^2 + K_{\Sigma} \phi^2 -\frac{1}{2}\scal_g \phi^2 \geq 0 
  \end{align*}for any $\phi \in C^{\infty}_0(\Sigma)$, where
  $K_{\Sigma}$ is the Gauss curvature of $\Sigma$.
  \begin{proof}This follows from the combination of (1.153) and (1.159) in \cite{Colding11}.
   \end{proof}
  \end{prop}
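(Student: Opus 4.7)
The plan is to derive the inequality from the standard second variation formula for area, combined with the Gauss equation and the three-dimensional decomposition of the scalar curvature. First, I would invoke the second variation of area under a normal perturbation. Because $\Sigma$ has codimension one and trivial normal bundle, there is a globally defined unit normal $\nu$, and any compactly supported normal variation can be written as $\phi\nu$ with $\phi\in C^{\infty}_0(\Sigma)$. The classical formula for the second variation of area of a minimal hypersurface reads
\begin{align*}
\frac{d^2}{dt^2}\bigg|_{t=0}\Area(\Sigma_t) = \int_\Sigma \bigl(|\nabla\phi|^2 - (|A|^2 + \mathrm{Ric}_M(\nu,\nu))\phi^2\bigr)\,dA,
\end{align*}
where $A$ is the second fundamental form of $\Sigma$ in $M$. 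Stability says that this quantity is nonnegative for every such $\phi$.

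Next, I would rewrite the integrand $|A|^2 + \mathrm{Ric}_M(\nu,\nu)$ in terms of $K_\Sigma$ and $\scal_g$. Two identities do the job. The Gauss equation for a two-dimensional surface in a three-manifold reads $K_\Sigma = K_M(T_p\Sigma) + \det A$, and minimality forces the principal curvatures to satisfy $\lambda_1 + \lambda_2 = 0$, so $\det A = -\tfrac12 |A|^2$ and therefore $|A|^2 = 2(K_M(T_p\Sigma) - K_\Sigma)$. Separately, in dimension three one has $\scal_g = 2(K_M(T_p\Sigma) + \mathrm{Ric}_M(\nu,\nu))$, as is seen by choosing an orthonormal frame $\{e_1,e_2,\nu\}$ and summing the three sectional curvatures. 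Combining these yields the key identity
\begin{align*}
|A|^2 + \mathrm{Ric}_M(\nu,\nu) = \tfrac12 |A|^2 + \tfrac12 \scal_g - K_\Sigma.
\end{align*}

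Substituting into the stability inequality and discarding the nonnegative term $\tfrac12 |A|^2 \phi^2$ immediately gives
\begin{align*}
\int_\Sigma \bigl(\tfrac12 \scal_g - K_\Sigma\bigr)\phi^2 \leq \int_\Sigma \bigl(\tfrac12|A|^2 + \tfrac12 \scal_g - K_\Sigma\bigr)\phi^2 \leq \int_\Sigma |\nabla\phi|^2,
\end{align*}
which rearranges to the claimed inequality. I do not expect any real obstacle here beyond the bookkeeping of sign conventions in the second variation formula and in the Gauss equation; the cited Colding--Minicozzi formulas (1.153) and (1.159) package exactly these two ingredients, after which the conclusion reduces to the one line of pointwise algebra above.
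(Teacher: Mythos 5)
Your derivation is correct and is precisely the computation that the paper's citation to Colding--Minicozzi (1.153) and (1.159) is meant to encode: the second variation formula gives $\int_\Sigma |\nabla\phi|^2 - (|A|^2 + \mathrm{Ric}_M(\nu,\nu))\phi^2 \geq 0$, and the Gauss equation together with the three-dimensional trace identity converts $|A|^2 + \mathrm{Ric}_M(\nu,\nu)$ into $\tfrac12|A|^2 + \tfrac12\scal_g - K_\Sigma$, after which dropping the nonnegative $\tfrac12|A|^2\phi^2$ term gives the claim. This is essentially the same approach, simply written out in full rather than delegated to the reference.
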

 \begin{proof}[Proof of the Kronheimer--Mrowka theorem]
  By \cite[Lemma 2.1]{Hass88} there is a finite union of oriented
  closed surfaces (with multiplicities) which minimizes area among
  surfaces in $[\Sigma]$. We denote it as $n_1 \Sigma_1 + n_2 \Sigma_2
  +\cdots n_k \Sigma_k$. Then we can apply the stability inequality and get
  \begin{align*}
   \sum_{i=1}^k |n_i| \int_{\Sigma_i} |\nabla \phi|^2 + K_{\Sigma_i} \phi^2 -\frac{1}{2}
   \scal_g \phi^2 \geq 0 ,
   \end{align*}for any $\phi \in C^{\infty}(\bigcup_{i=1}^k \Sigma_i
  )$. Taking $\phi \equiv 1$ we get 
  \begin{align*}
   \sum_{i=1}^k |n_i|\int_{\Sigma_i} K_{\Sigma_i}\geq
   \sum_{i=1}^k \frac{|n_i|}{2}\int_{\Sigma_i}\scal_g\geq \sum_{i=1}^k\frac{|n_i|}{2}\int_{\Sigma_i} \scal_{g_0}.
   \end{align*}Since
  $\scal_{g_0}=\scal_{(\Sigma,\hyp)}+\scal_{(\mathbb{S}^1,g_{\mathbb{S}^1})}=-2$ the
  right-hand side of the above inequality is $-\Area(n_1\Sigma_1+\cdots
  +n_k \Sigma_k)$.

  Now we want to estimate the left-hand side.
  Observe first that each $\Sigma_i$ is not a 2-sphere. This is because
  $\Sigma\times \mathbb{S}^1$ is aspherical and any $2$-spheres are
  null-homotopic, thus null-homologous. Then we have
  $\|[\Sigma_i]\|=-2\chi (\Sigma_i)$ for each $i$. We consider the composition of the
  inclusion of $\sum_{i=1}^k n_k \Sigma_i$ into
  $\Sigma\times \mathbb{S}^1$ with the projection from $\Sigma\times
  \mathbb{S}^1$ to $\Sigma$, and call this composition $f$. Since
  $\sum_{i=1}^k n_i \Sigma_i$ is homologous to $\Sigma$, $f$ has odd degree, in particular
  nonzero degree. Using simplicial norm we have
  \begin{align}\label{1appendix}
   |\deg f| \cdot \| [\Sigma]\|= \|f_{\ast}\sum_{i=1}^k n_i
   [\Sigma_i]\|\leq \sum_{i=1}^k |n_i|
   \|[\Sigma_i]\|=-2\sum_{i=1}^k|n_i|\chi (\Sigma_i).
   \end{align}By the Gromov--Thurston theorem, we get
  \begin{align}\label{2appendix}
   \pi \| [\Sigma]\|=\Vol (\Sigma, \hyp).
   \end{align}Combining the above two inequalities (\ref{1appendix}) and
  (\ref{2appendix}) (with the Gauss-Bonnet theorem) implies
  \begin{align*}
  \sum_{i=1}^k |n_i|\int_{\Sigma_i} K_i \leq -\Vol(\Sigma,\hyp)
   \end{align*}This implies the desired conclusion.
 \end{proof}
\end{document}